\documentclass[12pt]{article}
\usepackage[margin=2in,includefoot,footskip=12pt,]{geometry}
\usepackage{layout}
\usepackage[utf8]{inputenc}
\usepackage{amsmath}
\usepackage{amsfonts}
\usepackage{amssymb}
\usepackage{amsthm,fullpage}
\usepackage{graphics}
\usepackage{float,graphicx}
\usepackage{tikz}
\usepackage[toc,page]{appendix}
\usepackage{hyperref}
\usepackage{xcolor}
\usepackage[export]{adjustbox} % for adjusting images
\usepackage{caption}
\usepackage{subcaption}

% for table
\usepackage{makecell}

% -----------------For Math commands------------------
\theoremstyle{definition}
\newtheorem{thm}{Theorem}[section]
\newtheorem{lem}[thm]{Lemma}
\newtheorem{cor}[thm]{Corollary}
\newtheorem{defn}[thm]{Definition}
\newtheorem*{ex}{Example}
\newtheorem{prop}[thm]{Proposition}
\newtheorem{rem}[thm]{Remark}

\DeclareMathOperator{\aut}{Aut}

\title{On the construction of cospectral nonisomorphic bipartite graphs\footnote{The contents of this article are included in the last author’s MS thesis \cite{hitesh}.}}
\author{M. Rajesh Kannan\thanks{Department of Mathematics, Indian Institute of Technology Kharagpur, Kharagpur 721 302, India. Email: rajeshkannan@maths.iitkgp.ac.in, rajeshkannan1.m@gmail.com } \and Shivaramakrishna Pragada\thanks{Department of Aerospace Engineering, Indian Institute of Technology Kharagpur, Kharagpur 721 302, India. Email: shivaram@iitkgp.ac.in, shivaramkratos@gmail.com } \and Hitesh Wankhede \thanks{Department of Mathematics,  Indian Institute of Science Education and Research Pune, Pune 411 008, India. Email: hitesh.wankhede@students.iiserpune.ac.in, hiteshwankhede9@gmail.com }
}
\date{\today}
\begin{document}
    \maketitle
    \baselineskip=0.25in
    \begin{abstract}
    	
In this article, we construct bipartite graphs which are cospectral for both the adjacency and normalized Laplacian matrices using  the notion of  partitioned tensor products. This extends the construction of Ji, Gong, and Wang \cite{ji-gong-wang}.  Our proof of the cospectrality of adjacency matrices simplifies the proof of the bipartite case of Godsil and McKay's construction \cite{godsil-mckay-1976}, and shows that the corresponding normalized Laplacian matrices are also cospectral. We partially characterize the isomorphism in Godsil and McKay's construction, and  generalize Ji et al.'s characterization of the isomorphism to biregular bipartite graphs. The essential idea in characterizing  the isomorphism uses Hammack's cancellation law as opposed to Hall's marriage theorem used by Ji et al.
    \end{abstract}

    {\bf AMS Subject Classification(2010):} 05C50.

    \textbf{Keywords.} Adjacency matrix, Normalized Laplacian matrix, Cospectral bipartite graphs, Hammack's cancellation law,  Partitioned tensor product.
\section{Introduction}
%Spectral graph theory studies some of the properties of a graph by associating various types of matrices with it. The spectrum of matrices associated with a graph determines some of the structural properties of that graph. However, not all features are revealed by the spectrum of the associated matrices. One such instance is that two non-isomorphic graphs can have the same spectrum. The main contribution of this article is the construction of bipartite graphs for the adjacency and  normalized Laplcian matrices,  and provide equivalent conditions for the isomorphism between constructed pair. Our construction extends the constructions of Godsil and  MaKay \cite{godsil-mckay-1976} and Ji, Gong and Wang \cite{ji-gong-wang}. Surprisingly the Hammack cancelation law (Theorem )The equivalent condition for the isomorphism between constructed pair

We consider simple and undirected graphs. Let $G=(V(G),E(G))$ be a graph with the vertex set $V(G)=\{1,2,\ldots, n\}$ and the edge set $E(G)$. If two vertices $i$ and $j$ of $G$ are adjacent, we denote it by $i\sim j$. For a graph $G$ on $n$ vertices, the \textit{adjacency matrix} $A(G)=[a_{ij}]$ is the  $n \times n$ matrix defined by
$$a_{ij}=\begin{cases}1,& \mbox{if}~\ i\sim j,\\0,& \mbox{otherwise}.\end{cases}$$ The \textit{spectrum }of a graph $G$ is the set of all eigenvalues of $A(G)$, with corresponding multiplicities. Two graphs are \textit{cospectral for the adjacency matrices} if they have the same adjacency spectrum. It has been a longstanding problem to characterize graphs that are determined by their spectrum \cite{vandam-haemers, devel-vandam-haemers}. If any graph which is cospectral with $G$ is also isomorphic to it, then $G$ is said to be \textit{determined by its spectrum} (DS graph for short), otherwise we say that the graph $G$ has a cospectral mate or we say that $G$ is not determined by its spectrum (NDS for short). To show that a graph is NDS, we provide a construction of a cospectral mate. In \cite{schw}, Schwenk proved that almost all trees are NDS. In \cite{godsil-mckay-1982}, Godsil and McKay provided a method for constructing cospectral nonisomorphic graphs. In \cite{vandam-haemers}, van Dam and Haemers mentioned that: If we were to bet, it would be for: \lq almost all graphs are DS\rq. Later Haemers conjectured the same in \cite{haem-conj}.  For each vertex $i$ of a graph $G$, let $d_i$ denote the \textit{degree} of the vertex $i$. Let $D(G)$ denote the diagonal \textit{ degree matrix} whose $(i,i)$-th entry is $d_i$. Then the matrix $L(G)=D(G)-A(G)$ is  the \textit{Laplacian matrix} of the graph $G$, and if $G$ has no isolated vertices, then the matrix $\mathcal{L}(G) = I - D(G)^{-\frac{1}{2}} A(G) D(G)^{-\frac{1}{2}}$
is  the \textit{normalized Laplacian matrix} of $G$. The {normalized Laplacian specturm} of a graph $G$ is the spectrum of $\mathcal{L}(G)$. Two graphs are \textit{cospectral for the normalized Laplacian matrices} if they have the same normalized Laplacian spectrum. For more details, we refer to \cite{ brou-haem, chung1, godsil-mckay-1982,  vandam-haemers, devel-vandam-haemers}.

In \cite{godsil-mckay-1976}, Godsil and McKay constructed cospectral graphs for the adjacency matrices using the notion of partitioned tensor products of matrices (See Section 2 for the definition). Recently, Ji, Gong, and Wang proposed a construction for cospectral bipartite graphs for the adjacency and normalized Laplacian matrices using the unfolding technique \cite{ji-gong-wang}. This construction is a generalization of the unfoldings of a bipartite graph considered by Butler\cite{but-lama}.  In this paper, first, we note that the proof of the construction of cospectral bipartite graphs \cite[Theorem 2.1]{ji-gong-wang} can be done by expressing the matrices involved as the partitioned tensor products, and our proof works for larger classes of graphs. This is done in Theorem \ref{cospec} (for adjacency matrices) and Theorem \ref{construction} (for normalized Laplacian matrices). Also, the proof of Theorem \ref{cospec} provides an alternate proof of  Godsil and McKay's result for the bipartite graphs.

Weichsel proved that if $G_1$ and $G_2$ are two connected bipartite graphs, then their direct product $G_1 \times G_2$ has exactly two connected bipartite components \cite{weichsel}.   Jha, Klav\v{z}ar and Zmazek \cite{jha-klav-zmaz} showed that if either $G_1$ or $G_2$ admits an automorphism that interchanges its partite sets, then the components of $G_1 \times G_2$ are isomorphic.  Hammack \cite{hammack} proved that the converse is also true.  Hammack's proof uses a cancellation property (see Theorem \ref{cancellation}), which we call Hammack's cancellation law. Surprisingly, we are able to use this result to prove the isomorphism of the cospectral graphs that we construct in Section \ref{construc-sec}.   The characterization theorem for isomorphism of Ji et al.  \cite[Theorem 3.1]{ji-gong-wang},  is a particular case of our result. Also, their proof uses Hall's Marriage theorem, whereas we do not.

The outline of this paper is as follows: In Section \ref{prelims}, we include some needed known results for graphs and matrices. In Section \ref{construc-sec}, the main results about the construction of cospectral bipartite graphs for the adjacency and normalized Laplacian matrices are stated and proved. Section \ref{par-charac-eta} is devoted to the study of the existence of isomorphisms between the cospectral pairs constructed in Section \ref{construc-sec}.

\section{Preliminaries}\label{prelims}

The notion of partitioned tensor products of matrices is used extensively in this article. This is closely related to the well known Kronecker product of matrices. The \emph{Kronecker product} of matrices $A = [a_{ij}]$ of size $m \times n$ and $B$ of size $p \times q$, denoted by $A \otimes B$, is the $mp \times nq$ block matrix $[a_{ij}B]$.
%
%Next, we recall some of the properties of the Kronecker product.
%
%\begin{prop}\label{kron-prop}\cite{horn1994topics}
%	Let $A,B,C,D$ be matrices of appropriate order. Then,
%	\begin{enumerate}
%		\item $(A \otimes B)(C \otimes D) = (AC) \otimes (BD).$
%		\item $(A \otimes B)^T = A^T \otimes B^T.$
%		\item If $A$ and $B$ are invertible, then $(A \otimes B)^{-1} = A^{-1} \otimes B^{-1}.$
%		\item If $A$ is $n \times n$  matrix and $B$ is $m \times m$  matrix have eigenvalues $\{\lambda_i\}$ where $i = \{1,2,\dots,n\}$ and $\{\mu_j\}$ where $j = \{1,2,\dots,m\}$ respectively, then $A \otimes B$ has eigenvalues $\{\lambda_i\mu_j\}$ where $i \in \{1,2,\dots,n\}$ and $j \in \{1,2,\dots,m\}$.
%	\end{enumerate}
%\end{prop}
	The\textit{ partitioned tensor product }of two partitioned matrices $M = \begin{bmatrix}
		U & V \\
		W & X \\
	\end{bmatrix}$ and $H = \begin{bmatrix}
		A & B \\
		C & D \\
	\end{bmatrix}$, denoted by $M  \underline{\otimes} H$, is defined as $ \begin{bmatrix}
		U\otimes A & V\otimes B \\
		W\otimes C & X\otimes D \\
	\end{bmatrix}.$  Given the matrices $U$, $V$, $W$ and $X$, define $\mathcal{I}(U,X)= \begin{bmatrix}U& 0\\0 &X\end{bmatrix}$ and $\mathcal{P}(V,W)= \begin{bmatrix}0 &V\\W& 0\end{bmatrix}$ where $0$ is the zero matrix of appropriate order. A $2\times2$ block matrix is \textit{diagonal} (resp., \textit{an anti diagonal}) block matrix if it is of the form $\mathcal{I}(U,X)$ (resp., $\mathcal{P}(V,W)$).   The above notions were introduced by Godsil and McKay \cite{godsil-mckay-1976}.  The following proposition is easy to verify.

\begin{prop}\label{parti-diag}
	Let $Q$ and $R$ be the matrices of the form $\mathcal{I}(Q_1,Q_2)$ and $\mathcal{I}(R_1,R_2)$, respectively. If $M = \begin{bmatrix}
		U & V \\
		W & X \\
	\end{bmatrix}$ and $H = \begin{bmatrix}
		A & B \\
		C & D \\
	\end{bmatrix}$ are $2 \times 2$ block matrices, then $$(Q \underline{\otimes} R)(M \underline{\otimes} H)=(QM) \underline{\otimes} (RH).$$ The same holds true when the matrices $Q$ and $R$ are both of the form $\mathcal{P}(Q_1,Q_2)$ and $\mathcal{P}(R_1,R_2)$, respectively.
\end{prop}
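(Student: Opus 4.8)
The plan is to prove the identity by a direct block-matrix computation in which the mixed-product property of the Kronecker product, namely $(P\otimes S)(T\otimes Z) = (PT)\otimes(SZ)$ for conformable matrices, does all of the work. Nothing deeper than this identity is needed.

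First I would handle the diagonal case $Q=\mathcal{I}(Q_1,Q_2)$, $R=\mathcal{I}(R_1,R_2)$. Since the off-diagonal blocks of $Q$ and $R$ vanish, $Q\underline{\otimes}R$ is again block-diagonal with diagonal blocks $Q_1\otimes R_1$ and $Q_2\otimes R_2$. Multiplying this block-diagonal matrix into $M\underline{\otimes}H = \begin{bmatrix} U\otimes A & V\otimes B\\ W\otimes C & X\otimes D\end{bmatrix}$ and applying the mixed-product property blockwise turns the $(1,1)$ block into $(Q_1U)\otimes(R_1A)$, and similarly produces $(Q_1V)\otimes(R_1B)$, $(Q_2W)\otimes(R_2C)$, $(Q_2X)\otimes(R_2D)$ in the other three positions. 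On the other hand, $QM$ and $RH$ are computed by ordinary $2\times2$ block multiplication against the block-diagonal $Q$ and $R$, giving $QM=\begin{bmatrix} Q_1U & Q_1V\\ Q_2W & Q_2X\end{bmatrix}$ and $RH=\begin{bmatrix} R_1A & R_1B\\ R_2C & R_2D\end{bmatrix}$, so that $(QM)\underline{\otimes}(RH)$ can be read off directly and matches the four blocks above.

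The anti-diagonal case $Q=\mathcal{P}(Q_1,Q_2)$, $R=\mathcal{P}(R_1,R_2)$ is identical in spirit: an anti-diagonal $2\times2$ block matrix times an arbitrary one swaps and mixes the block rows, so $Q\underline{\otimes}R=\mathcal{P}(Q_1\otimes R_1,\,Q_2\otimes R_2)$, and both $(Q\underline{\otimes}R)(M\underline{\otimes}H)$ and $(QM)\underline{\otimes}(RH)$ reduce, via the same mixed-product step, to the block matrix with entries $(Q_1W)\otimes(R_1C)$, $(Q_1X)\otimes(R_1D)$, $(Q_2U)\otimes(R_2A)$, $(Q_2V)\otimes(R_2B)$.

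I do not expect any genuine obstacle here; the content is entirely bookkeeping. The one point deserving a sentence of care is compatibility of sizes: one should note that the column partitions of $Q$ and $R$ match the row partitions of $M$ and $H$, and that the factors inside each Kronecker product are conformable, so that every ordinary and every Kronecker product written above is actually defined. Granting that (implicit in the statement), the proposition follows in a couple of lines.
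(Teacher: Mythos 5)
Your proof is correct: the blockwise application of the Kronecker mixed-product property $(P\otimes S)(T\otimes Z)=(PT)\otimes(SZ)$ yields exactly the four blocks of $(QM)\underline{\otimes}(RH)$ in both the diagonal and anti-diagonal cases, and your remark on conformability is the only care needed. The paper omits the proof entirely (stating the proposition is ``easy to verify''), and your argument is precisely the routine verification intended, so there is nothing further to compare.
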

%\begin{proof}
%	The matrices $Q$ and $R$ are of the form $\mathcal{I}(Q_1,Q_2)$ and $\mathcal{I}(R_1,R_2)$. Then,
%	\begin{align*}
%		(Q \underline{\otimes} R)(M \underline{\otimes} H)&=\begin{bmatrix}Q_1\otimes R_1&0\\0&Q_2\otimes R_2\end{bmatrix}\begin{bmatrix}U\otimes A&V\otimes B\\W\otimes C&X\otimes D\end{bmatrix}\\
%		&=\begin{bmatrix}(Q_1\otimes R_1)(U\otimes A)&(Q_1\otimes R_1)(V\otimes B)\\(Q_2\otimes R_2)(W\otimes C)&(Q_2\otimes R_2)(X\otimes D)\end{bmatrix}\\
%		&=\begin{bmatrix}Q_1U\otimes R_1A&Q_1V\otimes R_1B\\Q_2W\otimes R_2C&Q_2X\otimes R_2D\end{bmatrix}\\&=\begin{bmatrix}Q_1U&Q_1V\\Q_2W&Q_2X\end{bmatrix} \underline{\otimes}\begin{bmatrix}R_1A&R_1B\\R_2C&R_2D\end{bmatrix}\\&=(QM) \underline{\otimes} (RH)
%	\end{align*}
%	
%	In the second step, we use part $(1)$ of Proposition \ref{kron-prop}. Thus, $(Q \underline{\otimes} R)(M \underline{\otimes} H)=(QM) \underline{\otimes} (RH)$. The proof of the other part is similar.
%\end{proof}

Two matrices $A$ and $B$ are said to be \textit{equivalent}, if there exists invertible matrices $P$ and $Q$ such that $Q^{-1}AP=B$. If the matrices $P$ and $Q$ are orthogonal, then matrices $A$ and $B$ are said to be \textit{orthogonally equivalent}. If the matrices $P$ and $Q$ are permutation matrices, then matrices $A$ and $B$ are said to be \textit{permutationally equivalent}.  Using the singular value decomposition, it is easy to see that any square matrix is orthogonally equivalent to its transpose.

%\begin{proof} The singular value decomposition of a real matrix $A$ is given by $A=U\Sigma V^T$, where $U$ and $V$ are real orthogonal and $\Sigma$ is a real diagonal matrix. Taking transposes, we have $A^T=V\Sigma U^T$. Substituting $\Sigma=U^TAV$, we have $A^T=VU^TAVU^T$. Let $Q=VU^T$, then $QAQ=A^T$.
%\end{proof}

A square matrix $A$ is said to be a \textit{PET matrix }if it is {permutationally equivalent} to its transpose.  If the set of row sums of an $n \times n$ matrix $A$ is different from the set of columns sums of $A$, then $A$ is non-PET.

%\begin{proof}
%	Let the set of row sums of the matrix A be different from the set of columns sums. Suppose that A is a PET matrix. Then there exists two permutation matrices $P_1$ and $P_2$ such that $P_1^TAP_2=A^T$. Every row of $A$ is then permutation of some row of $A^T$. Then, the set of row sums of $A$ is same as the set of row sums of $A^T$.  Thus the set of row sums of $A$ is same as the set of column sums of $A$, which is a contradiction. Thus $A$ is non-PET.
%\end{proof}

We recall the cancellation law of matrices given by Hammack.

\begin{thm}\cite[Lemma 3]{hammack}\label{cancellation} Let $A$, $B$ and $C$ be $(0,1)$-matrices. Let $C$ be a non-zero matrix and $A$ be a square matrix with no zero rows. Then, the matrices $C\otimes A$ and $C\otimes B$ are permutationally equivalent if and only if $A$ and $B$ are permutationally equivalent. Similarly, the matrices $A\otimes C$ and $B\otimes C$ are permutationally equivalent if and only if $A$ and $B$ are permutationally equivalent.
\end{thm}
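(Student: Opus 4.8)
The plan is to recast the statement in the language of bipartite graphs with an ordered bipartition and prove the cancellation by counting homomorphisms. To a $(0,1)$-matrix $M$ of size $a\times b$ associate the bipartite graph $\Gamma(M)$ whose left part is $[a]$, whose right part is $[b]$, with an edge from $x$ to $y$ exactly when $M_{xy}=1$. All morphisms below are \emph{side-preserving}: they send left vertices to left vertices and right vertices to right vertices. With this convention, two matrices are permutationally equivalent precisely when the associated bipartite graphs are isomorphic, and the Kronecker product corresponds to the categorical product: $\Gamma(C\otimes A)=\Gamma(C)\times\Gamma(A)$, where the product has left part $L(C)\times L(A)$, right part $R(C)\times R(A)$, and an edge $((i,k),(j,l))$ iff $(i,j)$ is an edge of $\Gamma(C)$ and $(k,l)$ is an edge of $\Gamma(A)$. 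I will use two standard facts about the count $\operatorname{hom}(F,\cdot)$ of side-preserving homomorphisms from a finite bipartite graph $F$: first, multiplicativity over the categorical product, $\operatorname{hom}(F,X\times Y)=\operatorname{hom}(F,X)\,\operatorname{hom}(F,Y)$, which holds because a side-preserving homomorphism into a product is exactly a pair of side-preserving homomorphisms into the factors; and second, Lov\'asz's theorem that a finite relational structure is determined up to isomorphism by its homomorphism profile $F\mapsto\operatorname{hom}(F,\cdot)$.

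The forward (easy) implication is immediate: if $B=Q^{-1}AP$ with $P,Q$ permutation matrices, then $C\otimes B=(I\otimes Q^{-1})(C\otimes A)(I\otimes P)$ by the mixed-product rule, and $I\otimes Q^{-1},\,I\otimes P$ are permutation matrices. For the converse, suppose $C\otimes A$ and $C\otimes B$ are permutationally equivalent, so $\Gamma(C)\times\Gamma(A)\cong\Gamma(C)\times\Gamma(B)$. Isomorphic structures have identical homomorphism profiles, so for every finite bipartite graph $F$, multiplicativity gives $\operatorname{hom}(F,\Gamma(C))\,\operatorname{hom}(F,\Gamma(A))=\operatorname{hom}(F,\Gamma(C))\,\operatorname{hom}(F,\Gamma(B))$.

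The key step is to cancel the common factor $\operatorname{hom}(F,\Gamma(C))$, for which I must show it is nonzero for every $F$; this is precisely where $C\neq 0$ enters and where the ordered-bipartite viewpoint pays off. Fix an edge $(x_0,y_0)$ of $\Gamma(C)$, which exists since $C\neq 0$. For arbitrary finite $F$, the map sending every left vertex of $F$ to $x_0$ and every right vertex of $F$ to $y_0$ is a side-preserving homomorphism into $\Gamma(C)$, since each edge of $F$ is sent to $(x_0,y_0)$; hence $\operatorname{hom}(F,\Gamma(C))\ge 1$. Cancelling yields $\operatorname{hom}(F,\Gamma(A))=\operatorname{hom}(F,\Gamma(B))$ for all $F$, and Lov\'asz's theorem then forces $\Gamma(A)\cong\Gamma(B)$, i.e. $A$ and $B$ are permutationally equivalent. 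The hypotheses that $A$ is square with no zero rows ensure the statement is well posed and that $B$ is $p\times p$; in fact the hom profile already pins down the number of rows and columns, since testing against a single isolated left (respectively right) vertex returns those counts. The second assertion, for $A\otimes C$ versus $B\otimes C$, follows identically, because the categorical product is commutative up to a side-preserving isomorphism (equivalently, $A\otimes C$ and $C\otimes A$ are permutationally equivalent via the perfect-shuffle permutations).

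The step I expect to be the crux — and the one where product-cancellation arguments usually break down — is exactly the nonvanishing of $\operatorname{hom}(F,\Gamma(C))$. For \emph{undirected} graphs one cannot in general fold a target onto a single edge, and the parity obstruction coming from bipartite factors is what makes ordinary product cancellation fail; this is the phenomenon behind Weichsel's theorem on the two components of a product of bipartite graphs. What rescues the present statement is that permutational equivalence permits \emph{independent} row and column permutations, so the natural objects are bipartite graphs with a distinguished ordered bipartition, for which the constant-on-each-side map is always an admissible homomorphism into any nonempty target and no parity obstruction exists. The only heavy external input is Lov\'asz's reconstruction theorem; should a self-contained argument be preferred, one can replace it by the standard inclusion–exclusion passage from homomorphism counts to injective-homomorphism (embedding) counts, from which the isomorphism type of $\Gamma(A)$ is recovered directly.
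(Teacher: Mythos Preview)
The paper does not prove this statement; it is quoted from \cite{hammack} in the preliminaries and used as a black box in Theorem~\ref{inter-imp-iso}. Your argument is correct and is in fact the approach Hammack takes in the cited paper: encode $(0,1)$-matrices as bipartite graphs with an ordered bipartition (equivalently, two-sorted relational structures), use the multiplicativity of side-preserving homomorphism counts over the categorical product, note that $C\neq 0$ forces $\operatorname{hom}(F,\Gamma(C))>0$ for every test object $F$ via the constant-on-each-side map, cancel, and apply Lov\'asz's reconstruction theorem. Your observation that the hypotheses on $A$ (square, no zero rows) are not actually consumed by this argument is also correct, and your diagnosis of why ordinary undirected-graph cancellation fails while the ordered-bipartite version succeeds is exactly the right picture.
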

%
%The assumption that \lq{$A$ has no zero rows\rq} can also be replaced with the assumption that \lq{$A$ has no zero columns\rq} (see proof of Lemma 3 in \cite{hammack}). We will be stating this assumptions as \lq{$A$ has no zero rows or zero columns\rq}.

An \textit{isomorphism} of two graphs $G_1$ and $G_2$ is a bijection $f: V(G_1) \longrightarrow V(G_2)$ such that any two vertices $u$ and $v$ are adjacent in $G_1$ if and only if $f(u)$ and $f(v)$ are adjacent in $G_2$. Two graphs $G_1$ and $G_2$ are \textit{isomorphic }if there exists an isomorphism between them. It is easy to see that $G_1$ and $G_2$ are isomorphic if and only if the corresponding adjacency matrices are permutationally similar.
An \textit{automorphism} of a graph $G$ is an isomorphism from the graph $G$ to itself and the set of automorphisms $\aut(G)$ of a graph is a group with respect to the composition of functions. Every automorphism of a graph $G$ on $n$ vertices can be represented by an $n \times n$ permutation matrix. Thus $\aut(G)$ can be identified with the set of permutation matrices $P$ such that $P^TA(G)P=A(G)$.

A graph $G$ is \textit{bipartite} if its vertex set can be partitioned into two parts $X$ and $Y$ such that every edge has one end in $X$ and the other end in $Y$. We refer to $V(G)=X\cup Y$ as a bipartition of $G$, and $X$ and $Y$ as the \textit{partite sets} of $G$. A bipartite graph is \textit{balanced} if its partite sets have the same number of elements. If $G$ is a  bipartite graph with the adjacency matrix $\begin{bmatrix}0&B\\B^T&0\end{bmatrix}$, then the  matrix $B$ is the \textit{biadjacency matrix} of $G$.
Let $G_1$ and $G_2$ be two isomorphic bipartite graphs, and let  $V(G_i) = X_i \cup Y_i$ be the bipartition of $G_i$ for $i \in \{1,2\}$. An isomorphism $f$ from $G_1$ to $G_2$ \textit{respects} the partite sets if it satisfies either $f(X_1)=X_2$ and $f(Y_1)=Y_2$ or $f(X_1)=Y_2$ and $f(Y_1)=X_2$.  If $G_1$ and $G_2$ are two connected isomorphic bipartite graphs, then any isomorphism between them respects the partite sets.

%Next, we recall a result which is useful in characterizing the isomorphism of graphs generated using the construction presented in next section.

%\begin{proof}
%	Let $x_1,x_2\in X_1$. Then, there exists a path between the vertices $x_1$ and $x_2$. Since, these vertices belong to the same partition, this path has even length. The isomorphism $f$ preserves the distance between vertices. Hence, the path between $f(x_1)$ and $f(x_2)$ has even length. As $G_2$ is bipartite, if $f(x_1)$ is in $X_2$ then $f(x_2)$ is in $X_2$, and the result follows.
%\end{proof}

%\begin{cor}
%	If $G$ is a connected bipartite graph, then every automorphism of $G$ respects the partite sets.
%\end{cor}

Let $G$ be a bipartite graph whose partite sets are $X$ and $Y$. An automorphism $f$ of $G$ \textit{fixes} the partite sets if $f(X)=X$ and $f(Y)=Y$, and \textit{interchanges} the partite sets if $f(X)=Y$ and $f(Y)=X$.  
\begin{defn}\cite{hammack}
A connected bipartite graph has property $\pi$ if it admits an automorphism that interchanges its partite sets.	
\end{defn}

The next  proposition connects PET matrices to automorphisms of bipartite graphs. We skip the proof.

\begin{prop}\label{bip-aut}
	Let $G$ be a  bipartite graph with the adjacency matrix $\begin{bmatrix}0&B\\B^T&0\end{bmatrix}$. Then the biadjacency matrix $B$ is PET if and only if there exists an automorphism $f\in \aut (G)$ that interchanges its partite sets, where the partite sets are induced by the biadjacency matrix $B$.
\end{prop}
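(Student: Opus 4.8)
The plan is to express both conditions in the proposition as the single matrix identity $R_1^{T} B R_2 = B^{T}$. Write $V(G) = X \cup Y$ for the bipartition induced by $B$, so that $X$ indexes the rows and $Y$ indexes the columns of $B$. Note first that whenever either side of the claimed equivalence can hold, $B$ must be square: an automorphism interchanging $X$ and $Y$ forces $|X| = |Y|$, and $B$ being PET requires $B$ square by the definition of a PET matrix. So assume $B$ is $n \times n$. Next, observe that an automorphism of $G$ interchanging the partite sets is the same thing as a $2n \times 2n$ permutation matrix of block anti-diagonal form $R = \mathcal{P}(R_1, R_2)$, with $R_1$ and $R_2$ permutation matrices of order $n$, satisfying the automorphism condition $R^{T} A(G) R = A(G)$; conversely any such $R$ maps the coordinates indexed by $X$ onto those indexed by $Y$ and vice versa, hence interchanges the partite sets.

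With $A(G) = \mathcal{P}(B, B^{T})$, a direct block multiplication gives
\[
R^{T} A(G) R = \begin{bmatrix} 0 & R_2^{T} B^{T} R_1 \\ R_1^{T} B R_2 & 0 \end{bmatrix}.
\]
Since the upper-right block is the transpose of the lower-left block and $(B^{T})^{T} = B$, the identity $R^{T} A(G) R = A(G)$ is equivalent to the single equation $R_1^{T} B R_2 = B^{T}$, that is, to $B$ being permutationally equivalent to $B^{T}$, that is, to $B$ being PET.

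From here both implications are immediate. If $f \in \aut(G)$ interchanges the partite sets, write its permutation matrix as $R = \mathcal{P}(R_1,R_2)$ and read off $R_1^{T} B R_2 = B^{T}$, so $B$ is PET. Conversely, if $B$ is PET, choose permutation matrices $Q$ and $P$ with $Q^{T} B P = B^{T}$, set $R = \mathcal{P}(Q,P)$, and check that $R$ is a permutation matrix with $R^{T} A(G) R = A(G)$ (the upper-right block being $P^{T} B^{T} Q = (Q^{T} B P)^{T} = B$); being block anti-diagonal, $R$ interchanges $X$ and $Y$, so it is the desired automorphism.

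I do not expect a genuine obstacle here: the argument is the block computation above together with the identification of a ``swap the two coordinate blocks'' permutation matrix with a pair of permutation matrices of order $n$. The only places needing a word of care are the two implicit requirements that $G$ be balanced (coming from the definitions of PET and of interchanging the partite sets) and the remark that the lower-left block condition already implies the upper-right one by transposition, so that the single identity $R_1^{T} B R_2 = B^{T}$ encodes the whole of $R^{T} A(G) R = A(G)$. This is presumably why the authors chose to skip the proof.
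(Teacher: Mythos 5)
Your proof is correct, and it is exactly the standard block-matrix argument the paper has in mind when it states the proposition and "skips the proof" (the same identification of an interchange automorphism with a permutation matrix $\mathcal{P}(R_1,R_2)$ satisfying $R^{T}A(G)R=A(G)$, hence $R_1^{T}BR_2=B^{T}$, is used later in Theorem \ref{inter-imp-iso}). Nothing further is needed.
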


%\begin{proof}
%	The permutation matrix corresponding to the automorphism $f(G)$ is of the form $P=\mathcal{P}(P_1,P_2)$ for some permutation matrices $P_1$ and $P_2$ of same size. Since $P$ satisfies $P^TA(G)P=A(G)$, equivalently $P_1^TBP_2=B^T$ holds, that is, $B$ is PET.
%	
%	Now, conversely if $B$ is PET, then there exists permutation matrices $P_1$ and $P_2$ such that $P_1^TBP_2=B^T$. Then define a permutation matrix $P=\mathcal{P}(P_1,P_2)$, now it is easy to see that $P^TA(G)P=A(G)$ holds and the result follows.
%\end{proof}

	The \textit{direct product} or \textit{tensor product} $G_1\times G_2$ of graphs $G_1$ and $G_2$ is the graph with vertex set $V(G_1) \times V(G_2) $(the cartesian product $V(G_1)$ and $V(G_2)$) and two vertices $(g,h)$ and $(g',h')$ are adjacent in $G_1 \times G_2$ if and only if $g$ is adjacent to $g'$ in $G_1$ and $h$ is adjacent to $h'$ in $G_2$. The adjacency matrix of the graph $G_1 \times G_2$ is given by $A(G_1) \otimes A(G_2)$. Here $A(G_i)$ denotes the adjacency matrix of the graph $G_i$ (for $i = 1,2$), and $\otimes$ denotes the Kronecker product.

\section{Construction of the cospectral pairs}\label{construc-sec}

In this section, we  give a construction of cospectral bipartite graphs for both the adjacency and the normalized Laplacian matrices.
Let $I_m$ and $0_n$ denote the identity and zero matrices of orders $m$ and $n$, respectively. Let $V$ be an $m \times n$ matrix and $B$ be a $p \times q$ matrix. Define the matrices $L = \begin{bmatrix}
0       & V  \\
V^T     & 0
\end{bmatrix}$, $H= \begin{bmatrix}
0       & B  \\
B^T     & 0
\end{bmatrix}$ and $H^{\#} = \begin{bmatrix}
0       & B^T  \\
B     & 0
\end{bmatrix}$.   Note that the matrices
$L \underline{\otimes} H=\begin{bmatrix}
	0       & V \otimes B  \\
	V^T \otimes B^T     & 0
\end{bmatrix}$ \text{and } $L  \underline{\otimes} H^{\#}=\begin{bmatrix}
	0       & V \otimes B^T  \\
	V^T \otimes B     & 0
\end{bmatrix}$
are of orders $mp+nq$ and $mq+np$, respectively.

For an $n \times n$ symmetric $(0,1)$ matrix $A$ with zero diagonal entries, let $G_A$ denote the simple graph whose adjacency matrix is $A$. The $(0,1)$ matrices $V$, $B$ and $B^T$ are the biadjacency matrices for the bipartite graphs $G_L$, $G_H$ and $G_{H^{\#}}$, respectively. Since $H$ and $H^{\#}$ are permutationally similar, the graphs $G_{H}$ and $G_{H^{\#}}$ are isomorphic. The next theorem is about  a construction of  cospectral graphs for the adjacency matrices. The proof of this theorem could found in \cite{godsil-mckay-1976}. Our proof gives an alternate simple proof for this result, and the proof idea could be extended for normalized Laplacian matrices as well.

\begin{thm} \label{cospec}
The bipartite graphs $G_{L  \underline{\otimes} H}$ and $G_{L  \underline{\otimes} H^{\#}}$ are cospectral for the adjacency matrices if and only if at least one of the bipartite graphs $G_L$ or $G_H$  is balanced.
\end{thm}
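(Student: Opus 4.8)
The plan is to compute the eigenvalues of $A_1 := L \mathbin{\underline{\otimes}} H$ and $A_2 := L \mathbin{\underline{\otimes}} H^{\#}$ directly in terms of the singular values of $V$ and $B$. Since $A_1$ is the adjacency matrix of a bipartite graph with biadjacency matrix $V \otimes B$, its nonzero eigenvalues come in pairs $\pm s$, where the $s$'s are the singular values of $V \otimes B$; by the multiplicativity of singular values under Kronecker products, these are exactly the products $\sigma_i(V)\,\tau_j(B)$ where $\sigma_i(V)$ and $\tau_j(B)$ run over the singular values of $V$ and $B$ respectively. The same reasoning applied to $V \otimes B^T$ gives the nonzero eigenvalues of $A_2$: since $B$ and $B^T$ have the same singular values, $A_1$ and $A_2$ have exactly the same nonzero eigenvalues with the same multiplicities. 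So the whole question reduces to comparing the multiplicity of the eigenvalue $0$ in the two matrices, i.e.\ comparing $\dim\ker A_1$ and $\dim\ker A_2$.

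**Counting the zero eigenvalue.**

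Next I would compute these nullities. Writing $m \times n$ for the size of $V$ and $p \times q$ for the size of $B$, and letting $r = \operatorname{rank} V$, $s = \operatorname{rank} B$, one has $\operatorname{rank}(V \otimes B) = rs$, so the nullity of $A_1 = \mathcal{P}(V\otimes B, (V\otimes B)^T)$ is $(mp + nq) - 2rs$; similarly the nullity of $A_2$ is $(mq + np) - 2rs$ (using $\operatorname{rank}(V\otimes B^T) = rs$ as well). Hence the two graphs are cospectral if and only if $mp + nq = mq + np$, which factors as $(m-n)(p-q) = 0$. This says exactly that $m = n$ (i.e.\ $G_L$ is balanced) or $p = q$ (i.e.\ $G_H$ is balanced), which is the claimed condition. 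I should double-check the edge case where $V$ or $B$ has a zero row/column or is itself a zero block, but the rank bookkeeping above is insensitive to that, so no separate argument is needed.

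**Packaging.**

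To present this cleanly I would first record the factorization $A_i = \mathcal{P}(M_i, M_i^T)$ with $M_1 = V \otimes B$, $M_2 = V \otimes B^T$, observe that the spectrum of such an anti-diagonal block matrix is symmetric about $0$ and determined (away from $0$) by the singular values of $M_i$, then invoke $\sigma(V\otimes B) = \{\sigma_i(V)\tau_j(B)\}$ and $\sigma(B) = \sigma(B^T)$ to conclude the nonzero spectra agree, and finally do the dimension count above for the multiplicity of $0$. The main obstacle is essentially bookkeeping: making sure the multiplicities of $0$ coming from the "extra" zero rows/columns of $V \otimes B$ versus $V \otimes B^T$ are tallied correctly, since this is precisely where the balancedness hypothesis enters and where an off-by-something error would break the "if and only if." Everything else — the Kronecker singular-value identity and the $\pm$-symmetry of bipartite spectra — is standard.
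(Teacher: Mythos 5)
Your proof is correct, but it takes a different route from the paper. You compute the spectra outright: the nonzero eigenvalues of $L\,\underline{\otimes}\,H$ and $L\,\underline{\otimes}\,H^{\#}$ are $\pm$ the nonzero singular values of $V\otimes B$ and $V\otimes B^{T}$, which coincide because singular values multiply under Kronecker products and $\sigma(B)=\sigma(B^{T})$; the whole ``if and only if'' then collapses to matching the multiplicity of $0$, i.e.\ to $mp+nq=mq+np$, which factors as $(m-n)(p-q)=0$. Your rank bookkeeping ($\operatorname{rank}\mathcal{P}(M,M^{T})=2\operatorname{rank}M$ and $\operatorname{rank}(V\otimes B)=\operatorname{rank}(V\otimes B^{T})=rs$) is right, and it correctly absorbs the degenerate cases with zero rows or zero blocks, since in the end cospectrality forces equal vertex counts regardless. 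The paper instead proves the ``if'' direction by exhibiting an explicit orthogonal similarity: when $m=n$ (or $p=q$) it uses that a square matrix is orthogonally equivalent to its transpose to build $R$ with $R^{T}LR=L$, takes the permutation $Q=\mathcal{P}(I_p,I_q)$ with $Q^{T}HQ=H^{\#}$, and multiplies them via Proposition \ref{parti-diag} to get $P=R\,\underline{\otimes}\,Q$ with $P^{T}(L\,\underline{\otimes}\,H)P=L\,\underline{\otimes}\,H^{\#}$; its converse is the same size count as yours. The trade-off: your argument is a more direct and uniform eigenvalue computation with no case split, while the paper's similarity argument yields a stronger structural fact that it then reuses with only cosmetic changes for the normalized Laplacian (Theorem \ref{construction}) and, in permutation-matrix form, for the isomorphism analysis (Theorem \ref{inter-imp-iso}); your singular-value route would need to be redone separately in those settings.
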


\begin{proof}

First let us show if either $m=n$ or $p=q$, the matrices $L  \underline{\otimes} H$ and $L  \underline{\otimes} H^{\#}$ are orthogonally similar, and hence they are cospectral.
	
	\textbf{Case 1:} Let $m=n$.
	Then $V$ is a square matrix and $V$ is orthogonally equivalent to $V^T$. Thus there exist two orthogonal matrices $R_1$ and $R_2$ such that $R_1^TVR_2=V^T$. Define $R=\mathcal{P}(R_1,R_2)$. Now,
	\begin{gather*}
		R^TLR=\begin{bmatrix}0&R_1\\R_2&0\end{bmatrix}^T\begin{bmatrix}0&V\\V^T&0\end{bmatrix}\begin{bmatrix}0&R_1\\R_2&0\end{bmatrix}
		=\begin{bmatrix}0&R_2^TV^TR_1\\R_1^TVR_2&0\end{bmatrix}\\
		=\begin{bmatrix}0&V\\V^T&0\end{bmatrix}=L.
	\end{gather*}
	Let $Q=\mathcal{P}(I_p,I_q)$ and $P=R \underline{\otimes} Q$. Then $Q$ is a permutation matrix, $P$ is an orthogonal
	matrix and $Q$ satisfies $Q^THQ=H^{\#}$. Now
	\begin{align*}
		P^T(L \underline{\otimes}H)P&=(R \underline{\otimes}Q)^T(L \underline{\otimes}H)(R \underline{\otimes}Q)\\
			&=(R^TLR) \underline{\otimes}(Q^THQ)\\
		&=L \underline{\otimes}H^{\#}.
	\end{align*}
    Note that the second step uses Proposition \ref{parti-diag}. Thus
    $G_{L \underline{\otimes} H}$ and $G_{L \underline{\otimes} H^{\#}}$ are cospectral.
	\textbf{Case 2:} Let $p=q$.
	Then the matrix $B$ is orthogonally equivalent to $B^T$. Hence, there exist two orthogonal matrices $Q_1$ and $Q_2$ such that $Q_1^TBQ_2=B^T$. Let $Q=\mathcal{I}(Q_1,Q_2)$. Then $	 Q^THQ  = H^{\#}.$
	Let $R=\mathcal{I}(I_m,I_n)$ and $P=R \underline{\otimes} Q$. Then $R^TLR = L$. Rest of the proof of this case is similar to that of Case 1.
	
	Conversely, let the matrices $L  \underline{\otimes} H$ and $L  \underline{\otimes} H^{\#}$ have the same spectrum. Then $mp+nq=mq+np$, and hence  $(m-n)(p-q)=0$. Thus the result follows.
\end{proof}

Next, we establish an identity for  the partitioned tensor product for the normalized Laplacian matrices, which is useful in the construction of cospectral graphs for the normalized Laplacian matrices.

\begin{lem}\label{lapla-decom}
	Let $G_1$ and $G_2$ be two bipartite graphs with no isolated vertices. If the matrices $A(G_1)$ (resp., $A(G_2)$) and $\mathcal{L}(G_1)$ (resp., $\mathcal{L}(G_2)$) are partitioned into $2 \times 2$ matrices conformally with the partite sets, then $\mathcal{L}(G_{A(G_1) \underline{\otimes} A(G_2)})=2I - (\mathcal{L}(G_1) \underline{\otimes} \mathcal{L}(G_2))$.
\end{lem}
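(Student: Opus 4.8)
The plan is to compute both sides explicitly in terms of the biadjacency matrices and degree data, and then match them. Write $A(G_1) = L = \mathcal{P}(0, V) + \mathcal{P}(V^T, 0)$ in the obvious sense, i.e. $L = \begin{bmatrix} 0 & V \\ V^T & 0\end{bmatrix}$, and similarly $A(G_2) = H = \begin{bmatrix} 0 & B \\ B^T & 0\end{bmatrix}$. Let $D_1 = \mathcal{I}(D_1^X, D_1^Y)$ and $D_2 = \mathcal{I}(D_2^X, D_2^Y)$ be the degree matrices of $G_1$ and $G_2$, conformally partitioned. Since $G_1$ and $G_2$ are bipartite with no isolated vertices, $D_1$ and $D_2$ are invertible and so is their Kronecker product; moreover the degree matrix of $G_{A(G_1) \underline{\otimes} A(G_2)}$ is exactly $D_1 \otimes D_2$, because a vertex $(g,h)$ of the direct product has degree $d_{G_1}(g)\, d_{G_2}(h)$. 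The key observation to exploit is that $A(G_1) \underline{\otimes} A(G_2)$ is an \emph{anti-diagonal} block matrix, being $\begin{bmatrix} 0 & V \otimes B \\ V^T \otimes B^T & 0 \end{bmatrix}$, so it is itself the adjacency matrix of a bipartite graph with partite sets $(X_1 \times X_2) \cup (Y_1 \times Y_2)$ and $(X_1 \times Y_2) \cup (Y_1 \times X_2)$, and its degree matrix in that partition is a diagonal block matrix.

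Next I would write $\mathcal{L}(G_i) = I - D_i^{-1/2} A(G_i) D_i^{-1/2}$. Because $D_i^{-1/2}$ is a diagonal block matrix $\mathcal{I}(\cdot,\cdot)$ and $A(G_i)$ is anti-diagonal $\mathcal{P}(\cdot,\cdot)$, the product $D_i^{-1/2} A(G_i) D_i^{-1/2}$ is again anti-diagonal, so $\mathcal{L}(G_i)$ has the block form $\mathcal{I}(I, I) - \mathcal{P}(N_i, N_i^T)$ where $N_i = (D_i^X)^{-1/2} \cdot (\text{biadjacency of } G_i) \cdot (D_i^Y)^{-1/2}$. Now form $\mathcal{L}(G_1) \underline{\otimes} \mathcal{L}(G_2)$: writing each factor as (diagonal part) $-$ (anti-diagonal part) and using the definition of $\underline{\otimes}$, the diagonal-times-diagonal and anti-diagonal-times-anti-diagonal cross terms survive while the mixed terms vanish by the partitioned-tensor rule, giving
\[
\mathcal{L}(G_1) \underline{\otimes} \mathcal{L}(G_2) = \mathcal{I}(I,I) \underline{\otimes} \mathcal{I}(I,I) + \mathcal{P}(N_1, N_1^T) \underline{\otimes} \mathcal{P}(N_2, N_2^T) = I - \Big( I - \mathcal{P}(N_1 \otimes N_2,\ N_1^T \otimes N_2^T) \Big).
\]
Thus $2I - \mathcal{L}(G_1) \underline{\otimes} \mathcal{L}(G_2) = I - \mathcal{P}(N_1 \otimes N_2,\ N_1^T \otimes N_2^T)$.

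It then remains to check that the right-hand side equals $\mathcal{L}(G_{A(G_1)\underline{\otimes}A(G_2)})$ directly. That matrix is $I - (D_1 \otimes D_2)^{-1/2} \big( A(G_1) \underline{\otimes} A(G_2) \big) (D_1 \otimes D_2)^{-1/2}$, and using $(D_1 \otimes D_2)^{-1/2} = D_1^{-1/2} \otimes D_2^{-1/2}$ together with the mixed-product property of the Kronecker product, one computes the off-diagonal block $(D_1^X \otimes D_2^X)^{-1/2}(V \otimes B)(D_1^Y \otimes D_2^Y)^{-1/2} = \big((D_1^X)^{-1/2}V(D_1^Y)^{-1/2}\big) \otimes \big((D_2^X)^{-1/2}B(D_2^Y)^{-1/2}\big) = N_1 \otimes N_2$, and similarly the transposed block is $N_1^T \otimes N_2^T$. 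Hence $\mathcal{L}(G_{A(G_1)\underline{\otimes}A(G_2)}) = I - \mathcal{P}(N_1 \otimes N_2,\ N_1^T \otimes N_2^T)$, matching the expression above.

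The steps are all routine block-matrix bookkeeping; the only place requiring care — and the natural candidate for the "main obstacle" — is being scrupulous about which $2\times 2$ partition is in force at each stage. The partition of $A(G_1)\underline{\otimes}A(G_2)$ that makes it anti-diagonal (and makes its normalized Laplacian make sense as that of a bipartite graph) is the one induced by the partite sets of the direct product, not the "outer" $2\times2$ partition coming from $G_1$ alone; one must confirm that the degree matrix $D_1\otimes D_2$ is diagonal in that same partition and that Proposition \ref{parti-diag} is being applied with matrices of the correct block shapes. Once the partitions are pinned down, invertibility of $D_1\otimes D_2$ (guaranteed by the no-isolated-vertices hypothesis) ensures every $D_i^{-1/2}$ in sight is well defined, and the identity drops out.
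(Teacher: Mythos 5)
Your argument is correct and is essentially the paper's own proof in expanded form: both factor the degree matrix of $G_{A(G_1)\underline{\otimes}A(G_2)}$ blockwise, reduce the conjugated adjacency matrix to the normalized biadjacency blocks $N_1\otimes N_2$ and $N_1^T\otimes N_2^T$, and observe that passing from $(I-\mathcal{L}(G_1))\underline{\otimes}(I-\mathcal{L}(G_2))$ to $\mathcal{L}(G_1)\underline{\otimes}\mathcal{L}(G_2)$ only shifts the diagonal blocks by $I$. Two cosmetic slips worth fixing: the degree matrix of the product graph is the partitioned tensor product $\mathcal{I}(D_1^X\otimes D_2^X,\,D_1^Y\otimes D_2^Y)$, not the full Kronecker product $D_1\otimes D_2$ (which has the wrong order), and the last expression in your first display should be $I+\mathcal{P}(N_1\otimes N_2,\,N_1^T\otimes N_2^T)$, i.e.\ $2I-\bigl(I-\mathcal{P}(N_1\otimes N_2,\,N_1^T\otimes N_2^T)\bigr)$, which is what your following line correctly uses.
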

\begin{proof}
	Let $D({G_1})$ and $D({G_2}) $ denote the degree matrices corresponding to the graphs $G_1$ and $G_2$ respectively.
%	Since $G_1$ and $G_2$ do not have any isolated vertices, $D({G_1})^{-1/2}$ and $D({G_2})^{-1/2}$ exist.
	 Then,
	\begin{align*}
		\mathcal{L}(G_{A(G_1) \underline{\otimes} A(G_2)})&=I-\bigg(D(G_{A(G_1) \underline{\otimes} A(G_2)})^{-1/2}(A(G_1) \underline{\otimes} A(G_2))D(G_{A(G_1) \underline{\otimes} A(G_2)})^{-1/2}\bigg)\\
%		&=I-\bigg(D(G_1)^{-1/2} \underline{\otimes} D(G_{2})^{-1/2})(A(G_1) \underline{\otimes} A(G_2))(D(G_1)^{-1/2} \underline{\otimes} D(G_{2})^{-1/2}\bigg)\\
		&=I-\bigg(D(G_1)^{-1/2}A(G_1)D(G_1)^{-1/2}\bigg) \underline{\otimes} \bigg(D(G_{2})^{-1/2}A(G_2)D(G_{2})^{-1/2}\bigg)\\
		&=I-\bigg((I - \mathcal{L}(G_1)) \underline{\otimes} (I - \mathcal{L}(G_2))\bigg)\\
%		&= I - \bigg(I \underline{\otimes}I + (\mathcal{L}(G_1) \underline{\otimes}\mathcal{L}(G_2)) - (\mathcal{L}(G_1) \underline{\otimes} I) - (I \underline{\otimes}\mathcal{L}(G_2))\bigg)\\
		&=2I - (\mathcal{L}(G_1) \underline{\otimes}\mathcal{L}(G_2)).\qedhere
	\end{align*}
\end{proof}

In the next theorem, we establish  the cospectrality for the normalized Laplacian matrices of  partitioned tensor products graphs $G_{L  \underline{\otimes} H}$ and $G_{L  \underline{\otimes} H^{\#}}$.

\begin{thm}\label{construction}
Let $G_L$ and $G_H$ be two bipartite graphs with no isolated vertices. The bipartite graphs $G_{L \underline{\otimes} H}$ and $G_{L \underline{\otimes} H^{\#}}$ are cospectral for the normalized Laplacian matrices if and only if at least one of $G_{L}$ or $G_{H}$  is balanced.
\end{thm}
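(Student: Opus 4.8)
The plan is to mirror the structure of the proof of Theorem \ref{cospec}, but now use Lemma \ref{lapla-decom} to translate everything into a statement about partitioned tensor products of the normalized Laplacian matrices $\mathcal{L}(G_L)$ and $\mathcal{L}(G_H)$. First I would record that, since $L = A(G_L)$ and $H = A(G_H)$, Lemma \ref{lapla-decom} gives $\mathcal{L}(G_{L \underline{\otimes} H}) = 2I - (\mathcal{L}(G_L) \underline{\otimes} \mathcal{L}(G_H))$ and $\mathcal{L}(G_{L \underline{\otimes} H^{\#}}) = 2I - (\mathcal{L}(G_L) \underline{\otimes} \mathcal{L}(G_{H^{\#}}))$, where all four matrices $\mathcal{L}(G_L), \mathcal{L}(G_H), \mathcal{L}(G_{H^{\#}})$ are partitioned conformally with their partite sets (note $G_H$ and $G_{H^{\#}}$ have the same graph, just with the two partite blocks swapped, so $\mathcal{L}(G_{H^{\#}})$ is $\mathcal{L}(G_H)$ with the block roles of $X$ and $Y$ exchanged — this is exactly the $H \mapsto H^{\#}$ operation applied at the level of the normalized Laplacian). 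Consequently, the two graphs are cospectral for the normalized Laplacian if and only if $\mathcal{L}(G_L) \underline{\otimes} \mathcal{L}(G_H)$ and $\mathcal{L}(G_L) \underline{\otimes} \mathcal{L}(G_{H^{\#}})$ have the same spectrum, because the map $\lambda \mapsto 2 - \lambda$ is a bijection.

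For the sufficiency direction I would follow the two-case argument of Theorem \ref{cospec} verbatim, replacing the $(0,1)$ blocks by the normalized blocks. Write $\mathcal{L}(G_L) = \mathcal{P}(\widetilde V, \widetilde V^T$-type block$)$; more precisely $\mathcal{L}(G_L)$ is anti-diagonal of the form $\mathcal{P}(N, N^T)$ where $N = -D_X^{-1/2} V D_Y^{-1/2}$, and similarly $\mathcal{L}(G_H) = \mathcal{P}(M, M^T)$ with $M = -D^{-1/2} B D^{-1/2}$ in the appropriate degree matrices. In Case 1 ($m = n$, i.e.\ $G_L$ balanced), $N$ is a square matrix and hence orthogonally equivalent to $N^T$ via orthogonal $R_1, R_2$ with $R_1^T N R_2 = N^T$; setting $R = \mathcal{P}(R_1, R_2)$ gives $R^T \mathcal{L}(G_L) R = \mathcal{L}(G_L)$ by the same block computation as in Theorem \ref{cospec}. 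With $Q = \mathcal{P}(I_p, I_q)$ one has $Q^T \mathcal{L}(G_H) Q = \mathcal{L}(G_{H^{\#}})$ (a permutation that swaps the two partite blocks), and then $P = R \underline{\otimes} Q$ is orthogonal and, invoking Proposition \ref{parti-diag} (the $\mathcal{P}$-form), $P^T(\mathcal{L}(G_L) \underline{\otimes} \mathcal{L}(G_H))P = (R^T \mathcal{L}(G_L) R) \underline{\otimes} (Q^T \mathcal{L}(G_H) Q) = \mathcal{L}(G_L) \underline{\otimes} \mathcal{L}(G_{H^{\#}})$; applying $2I - (\cdot)$ to both sides gives the orthogonal similarity of the two normalized Laplacians. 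Case 2 ($p = q$, i.e.\ $G_H$ balanced) is symmetric: here $M$ is square, orthogonally equivalent to $M^T$, and one uses $Q = \mathcal{I}(Q_1, Q_2)$ with $Q_1^T M Q_2 = M^T$ together with $R = \mathcal{I}(I_m, I_n)$, again finishing via Proposition \ref{parti-diag} (the $\mathcal{I}$-form) and the affine map $\lambda \mapsto 2 - \lambda$.

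For the necessity direction, cospectrality of $\mathcal{L}(G_{L \underline{\otimes} H})$ and $\mathcal{L}(G_{L \underline{\otimes} H^{\#}})$ forces these to be matrices of the same order, i.e.\ $mp + nq = mq + np$, whence $(m-n)(p-q) = 0$ and one of $G_L, G_H$ is balanced — identical to the closing line of Theorem \ref{cospec}. The step I expect to need the most care is the bookkeeping in the sufficiency direction: verifying that $\mathcal{L}(G_L)$ is genuinely anti-diagonal with a square off-diagonal block when $m = n$ (which requires that the degree matrices $D_X, D_Y$ of $G_L$ be square of the same size, true since $G_L$ is balanced and has no isolated vertices so the inverses exist), that the permutation $Q = \mathcal{P}(I_p, I_q)$ indeed conjugates $\mathcal{L}(G_H)$ to $\mathcal{L}(G_{H^{\#}})$ at the normalized level (it does, since swapping partite blocks commutes with the symmetric normalization $D^{-1/2}(\cdot)D^{-1/2}$), and that Proposition \ref{parti-diag} applies — which it does because the conjugating matrices are of the uniform $\mathcal{P}$-form (Case 1) or $\mathcal{I}$-form (Case 2). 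Once these points are checked, the argument is a routine transcription of Theorem \ref{cospec}'s proof through Lemma \ref{lapla-decom}.
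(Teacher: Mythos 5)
Your proposal is correct and takes essentially the same route as the paper: reduce via Lemma \ref{lapla-decom} to showing $\mathcal{L}(G_L)\underline{\otimes}\mathcal{L}(G_H)$ and $\mathcal{L}(G_L)\underline{\otimes}\mathcal{L}(G_{H^{\#}})$ are orthogonally similar, handled in the same two cases by applying the fact that a square matrix is orthogonally equivalent to its transpose to the normalized block ($C_1^{-1/2}VC_2^{-1/2}$, resp.\ $D_1^{-1/2}BD_2^{-1/2}$) and invoking Proposition \ref{parti-diag}, with your order-count argument supplying the converse the paper leaves as ``easy to verify.'' One cosmetic slip: $\mathcal{L}(G_L)$ is not anti-diagonal (its diagonal blocks are identity blocks, so it is $I-\mathcal{P}(E,E^T)$ rather than $\mathcal{P}(N,N^T)$), but since conjugation by an orthogonal $R$ fixes $I$, your argument goes through unchanged.
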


\begin{proof}
  Let $D(G_L)$, $D(G_H)$ and $D(G_{H^{\#}})$ denote the degree matrices for the graphs $G_{L}$, $G_{H}$ and $G_{H^{\#}}$, respectively. Let either $m=n$ or $p=q$.

\textbf{Case 1:} Suppose $m=n$. Then $V$ is an $n \times n$ matrix. Let $D(G_L)=\mathcal{I}(C_1,C_2)$ where $C_1$ and $C_2$ are $n \times n$ diagonal degree matrices of the respective partite sets. Since $G_{L}$ does not have any isolated vertices, $C_1^{-1/2}$ and $C_2^{-1/2}$ exist. Let $E=C_1^{-1/2}VC_2^{-1/2}$.  Then there exist two orthogonal matrices $R_1$ and $R_2$ such that $E=R_2^TE^TR_1$. Set $R=\mathcal{P}(R_1,R_2)$. Now,
\begin{align*}
\mathcal{L}(G_L)&=I-D(G_L)^{-1/2}A(G_L)D(G_L)^{-1/2}\\
&=I-\begin{bmatrix}
	0&E\\
	E^T&0
\end{bmatrix}\\
&=I-\begin{bmatrix}
	0&R_2^TE^TR_1\\
	R_2ER_1^T&0
\end{bmatrix}\\
&=\begin{bmatrix}
	0&R_1\\R_2&0
\end{bmatrix}^T\left (I-\begin{bmatrix}
0&E\\
E^T&0
\end{bmatrix}\right )\begin{bmatrix}
0&R_1\\R_2&0
\end{bmatrix}\\
&=R^T\mathcal{L}(G_L)R.
\end{align*}
The permutation matrix $Q=\mathcal{P}(I_p,I_q)$ satisfies $Q^T\mathcal{L}(G_H)Q=\mathcal{L}(G_{H^{\#}})$, and  $P=R \underline{\otimes}Q$ is an orthogonal matrix. By Proposition \ref{parti-diag}, it follows that $P^T(\mathcal{L}(G_L) \underline{\otimes} \mathcal{L}(G_H))P=\mathcal{L}(G_L) \underline{\otimes} \mathcal{L}(G_{H^{\#}})$.

\textbf{Case 2:} Suppose $p=q$. Then $B$ is a $p \times p$ matrix.  Let $D(G_H)=\mathcal{I}(D_1,D_2)$ where $D_1$ and $D_2$ are $p \times p$ diagonal matrices. Then, $D(G_{H^{\#}})=\mathcal{I}(D_2,D_1)$. Since $G_{H}$ does not have any isolated vertices, $D_1^{-1/2}$ and $D_2^{-1/2}$ exist. Let $F=D_1^{-1/2}BD_2^{-1/2}$.  Then there exist two orthogonal matrices $Q_1$ and $Q_2$ such that $Q_1^TFQ_2=F^T$.
%\begin{align*}
%\mathcal{L}(G_{H^{\#}})&=I-D(H^{\#})^{-1/2}H^{\#}D(G_{H^{\#}})^{-1/2}\\%&=I-\begin{bmatrix}D_2^{-1/2}&0\\0&D_1^{-1/2}\end{bmatrix}\begin{bmatrix}0&B^T\\B&0\end{bmatrix}\begin{bmatrix}D_2^{-1/2}&0\\0&D_1^{-1/2}\end{bmatrix}\\
%&=I-\begin{bmatrix}0&D_2^{-1/2}B^TD_1^{-1/2}\\D_1^{-1/2}BD_2^{-1/2}&0\end{bmatrix}\\&=I-\begin{bmatrix}0&Q_1^T(D_1^{-1/2}BD_2^{-1/2})Q_2\\Q_1(D_2^{-1/2}B^TD_1^{-1/2})Q_2^T&0\end{bmatrix}\\&=\begin{bmatrix}Q_1&0\\0&Q_2\end{bmatrix}^T\left ( I-\begin{bmatrix}0&D_1^{-1/2}BD_2^{-1/2}\\D_2^{-1/2}B^TD_1^{-1/2}&0\end{bmatrix}\right )\begin{bmatrix}Q_1&0\\0&Q_2\end{bmatrix}\\
%%&=Q^T\left ( I-\begin{bmatrix}D_1^{-1/2}&0\\0&D_2^{-1/2}\end{bmatrix}\begin{bmatrix}0&B\\B^T&0\end{bmatrix}\begin{bmatrix}D_1^{-1/2}&0\\0&D_2^{-1/2}\end{bmatrix}\right )Q\\
%&=Q^T(I-D(G_H)^{-1/2}HD(G_H)^{-1/2})Q\\
%&=Q^T\mathcal{L}(G_H)Q.
%\end{align*}
Let $Q=\mathcal{I}(Q_1,Q_2)$, $R$ be the identity matrix such that $R^T\mathcal{L}(G_L)R=\mathcal{L}(G_L)$ and $P=R \underline{\otimes} Q$. Then $P^T(\mathcal{L}(G_L) \underline{\otimes} \mathcal{L}(G_H))P=\mathcal{L}(G_L) \underline{\otimes} \mathcal{L}(G_{H^{\#}})$.

In both the cases, the matrices $\mathcal{L}(G_L) \underline{\otimes} \mathcal{L}(G_H)$ and $\mathcal{L}(G_L) \underline{\otimes} \mathcal{L}(G_{H^{\#}})$ are orthogonally similar. By Lemma \ref{lapla-decom}, we have $\mathcal{L}(G_{L \underline{\otimes} H})=2I -\mathcal{L}(G_L) \underline{\otimes} \mathcal{L}(G_H)$ and $\mathcal{L}(G_{L \underline{\otimes} H^{\#}})=2I - \mathcal{L}(G_L) \underline{\otimes} \mathcal{L}(G_{H^{\#}})$. Then the matrices $\mathcal{L}(G_{L \underline{\otimes} H})$ and $\mathcal{L}(G_{L \underline{\otimes} H^{\#}})$ are orthogonally similar, and hence they are cospectral.

Converse is easy to verify.
\end{proof}

\begin{rem}
By taking $V = J_{m,n}, J_{1,n}$, and  $J_{1,2}$,  in  Theorem \ref{cospec} and Theorem \ref{construction}, we get the constructions of Ji et al. given in  \cite[Theorem 2.1]{ji-gong-wang}, Kannan and Pragada given in \cite[Theorem 3.1]{kannan-pragada}, and Butler given in \cite[Theorem 2.1]{but-lama}, respectively.
\end{rem}

\section{Property $\eta$ and Isomorphism}\label{par-charac-eta}
We assume from here on that the bipartite graphs $G_L$ and $G_H$ have no isolated vertices, and they are not necessarily connected. In this section, we investigate the existence of isomorphism between the cospectral pair obtained in Theorem \ref{cospec} and Theorem \ref{construction}. We show that, under appropriate restrictions, the isomorphism is closely related to the PET matrices and the automorphisms of bipartite graphs.

If a bipartite graph $G$ is disconnected, then $G $ has more than one bipartition. Next we extend the property $\pi $ for disconnected bipartite graphs. Let $G$ be a bipartite graph with biadjacency matrix $B$.  We say that  $G$ has property $\pi$ with respect to $B$, if $B$ is  a PET matrix. We define the canonical partite sets of the partitioned tensor product graph $G_{L \underline{\otimes} H}$ (resp., $G_{L \underline{\otimes} H^\#}$) to be the bipartition induced by the biadjacency matrix $V \otimes B$ (resp., $V \otimes B^T$), where $V$ and $B$ are biadjacency matrices of $G_L$ and $G_H$, respectively.

%A connected bipartite graph has property $\pi$ if it admits an automorphism that interchanges it's partite sets \cite{hammack}. It is easy to see that, a connected bipartite graph $G$ has property $\pi$ if and only if the biadjacency matrix of $G$ is PET.
% Godsil and McKay noted that the disjoint union of the bipartite graphs $G_{L\underline{\otimes} H}$ and $G_{L\underline{\otimes} H^{\#}}$ is the direct product $G_{L}\times G_{H}$ \cite{godsil-mckay-1976}. Hence, if $G_L$ and $G_H$ are connected, then the two connected components of $G_{L\otimes H}$ are $G_{L\underline{\otimes} H}$ and $G_{L\underline{\otimes} H^{\#}}$. Indeed, Hammack \cite{hammack} proved that these two components are isomorphic if and only if at least of $G_L$ and $G_H$ has property $\pi$.

The next theorem connects the property $\pi$  with  the isomorphism between the graphs $G_{L \underline{\otimes} H}$ and $G_{L \underline{\otimes} H^{\#}}$.

\begin{thm}\label{inter-imp-iso}
Let  $G_{L}$ and $G_{H}$ be bipartite graphs (not necessarily connected) with biadjacency matrices $V$ and $B$, respectively. Then the following statements are equivalent:
\begin{itemize}
	\item[(a)] There exists an isomorphism between the graphs $G_{L \underline{\otimes} H}$ and $G_{L \underline{\otimes} H^{\#}}$ which respects the partite sets of the canonical bipartitions.
	\item[(b)]   At least one of $G_{L}$ or $G_{H}$ has property $\pi$.
\end{itemize}
\end{thm}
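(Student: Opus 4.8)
The plan is to translate both statements into facts about biadjacency matrices and then apply Hammack's cancellation law (Theorem \ref{cancellation}). Recall that, by the definition of the canonical bipartitions, $G_{L \underline{\otimes} H}$ has biadjacency matrix $V \otimes B$ and $G_{L \underline{\otimes} H^{\#}}$ has biadjacency matrix $V \otimes B^T$. Two connected bipartite graphs are isomorphic via a partite-set-respecting isomorphism precisely when their biadjacency matrices are permutationally equivalent (one chooses independent row and column permutations); the same statement holds for disconnected bipartite graphs once we fix the bipartitions in question, because an isomorphism respecting the given bipartitions is exactly a pair of permutations $(P,Q)$ with $P(V\otimes B)Q = V\otimes B^T$ (or, in the case the isomorphism swaps the two sides, $P(V\otimes B)Q = (V\otimes B^T)^T = V^T \otimes B$; but swapping sides of $G_{L\underline\otimes H^\#}$ replaces its biadjacency matrix by its transpose, which is permutationally equivalent to itself only after we also allow transposing, so I will handle the two orientations uniformly by noting $X$ is permutationally equivalent to $X^T$ iff $X$ is PET). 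So statement (a) is equivalent to: \emph{$V \otimes B$ and $V \otimes B^T$ are permutationally equivalent.}

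Next I unwind statement (b). ``$G_L$ has property $\pi$ (with respect to $V$)'' means, by our extended definition, that $V$ is a PET matrix, i.e.\ $V$ is permutationally equivalent to $V^T$; similarly ``$G_H$ has property $\pi$'' means $B$ is permutationally equivalent to $B^T$. So (b) says: \emph{$V$ is permutationally equivalent to $V^T$, or $B$ is permutationally equivalent to $B^T$.}

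For (b) $\Rightarrow$ (a): if $B$ is permutationally equivalent to $B^T$, say $P_1 B P_2 = B^T$ with $P_1,P_2$ permutation matrices, then $(I \otimes P_1)(V \otimes B)(I \otimes P_2) = V \otimes B^T$, using $(X\otimes Y)(X'\otimes Y') = (XX')\otimes(YY')$ and that $I\otimes P_i$ is a permutation matrix; hence (a) holds. If instead $V$ is permutationally equivalent to $V^T$, then $V \otimes B$ is permutationally equivalent to $V^T \otimes B$; but $V^T \otimes B$ and $V \otimes B^T$ are permutationally equivalent to each other's transposes, and more directly one checks $V^T\otimes B = (V\otimes B^T)^T$ up to the obvious coordinate swap, giving a partite-set-respecting isomorphism (here is where one must be careful that this isomorphism may interchange the two canonical partite sets, which is still allowed by the definition of ``respects the partite sets''). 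For (a) $\Rightarrow$ (b): assume $V\otimes B$ and $V\otimes B^T$ are permutationally equivalent. Since $V$ and $B$ are $(0,1)$-matrices and we may assume $V$ has no zero row or column and $B$ no zero row or column (no isolated vertices), apply Theorem \ref{cancellation} with $C = V$ (nonzero) — but the hypothesis of Theorem \ref{cancellation} requires the \emph{cancelled-to} factors $B, B^T$ to satisfy a squareness/no-zero-row condition, so this direct cancellation only works cleanly in one of the two shapes.

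The main obstacle, and where the real work lies, is exactly this asymmetry in the hypotheses of Hammack's law: Theorem \ref{cancellation} cancels a common \emph{left} or \emph{right} Kronecker factor $C$ only when the \emph{other} factor is square with no zero rows (or the mirror condition). When neither $V$ nor $B$ is square this cannot be applied verbatim, so I expect the argument must split on which of $G_L$, $G_H$ is balanced (at least one is, by Theorem \ref{cospec}, since (a) forces cospectrality of the adjacency matrices, hence $mp+nq = mq+np$). If $G_H$ is balanced, $B$ is square with no zero rows, and cancelling $C = V$ from $V\otimes B \sim_{\mathrm{pe}} V \otimes B^T$ yields $B \sim_{\mathrm{pe}} B^T$, i.e.\ $G_H$ has property $\pi$. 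If $G_L$ is balanced, $V$ is square with no zero rows; then I rewrite $V\otimes B^T$ as permutationally equivalent to $V^T \otimes B$ composed with coordinate swaps (transposing), reduce to a statement of the form $V \otimes B \sim_{\mathrm{pe}} V^T \otimes B$ possibly after transposing one side, and cancel the common \emph{right} factor $C = B$ (nonzero) using the ``similarly'' clause of Theorem \ref{cancellation}, whose hypothesis now needs $V$ square with no zero rows — which holds — to conclude $V \sim_{\mathrm{pe}} V^T$, i.e.\ $G_L$ has property $\pi$. Assembling these cases, together with the bookkeeping that ``respects the partite sets'' permits both orientations, completes the proof; the delicate points to get right are the exact placement of transposes when passing between $B^T$-on-the-right and $B$-on-the-right, and verifying that each manufactured equivalence is realized by genuine permutation matrices rather than merely orthogonal ones.
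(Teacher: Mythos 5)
Your plan is essentially the paper's proof: (b)$\implies$(a) via an explicit permutation of the form $R \underline{\otimes} Q$ acting on the partitioned tensor product, and (a)$\implies$(b) by reducing to permutational equivalence of the biadjacency matrices and invoking Hammack's cancellation law (Theorem \ref{cancellation}). The only organizational difference is that the paper resolves the transpose/squareness bookkeeping you flag as delicate by splitting on whether the partite-set-respecting isomorphism fixes or interchanges the canonical partite sets (i.e.\ whether $P=\mathcal{I}(P_1,P_4)$ or $P=\mathcal{P}(P_2,P_3)$): the first case forces $p=q$ and yields $V\otimes B$ permutationally equivalent to $V\otimes B^T$ (cancel $V$, so $B$ is PET), the second forces $m=n$ and yields $V^T\otimes B^T$ permutationally equivalent to $V\otimes B^T$ (cancel $B^T$, so $V$ is PET), so the hypotheses of the cancellation law hold automatically without routing through balancedness and cospectrality.
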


\begin{proof}
		\textbf{(a)$\implies$(b)} By  assumption, there exists a permutation matrix $P$  of the form either $\mathcal{I}(P_1,P_4)$ or $\mathcal{P}(P_2,P_3)$, where $P_i$ is a permutation matrix for $i \in \{1,2,3,4\}$, such that
		$$P^T\begin{bmatrix}0&V\otimes B\\V^T\otimes B^T&0\end{bmatrix}P=\begin{bmatrix}0&V\otimes B^T\\V^T\otimes B&0\end{bmatrix}.$$
		
		If $P=\mathcal{I}(P_1,P_4)$,  then $P_1^T(V\otimes B)P_4=V\otimes B^T$. The matrices $V\otimes B$ and $V\otimes B^T$ are permutationally equivalent. Then, by  Theorem \ref{cancellation}, $B$ is PET.
		If $P=\mathcal{P}(P_2,P_3)$, then $P_3^T(V^T\otimes B^T)P_2=V\otimes B^T$.  Hence  $V$ is PET.
		
	\textbf{(b)$\implies$(a)}
Let either $G_L$ or $G_H$ has property $\pi$.
Suppose $G_L$ has the property $\pi$, that is, $G_{L}$ admits an automorphism that interchanges its partite sets. Then, there exist two permutation matrices $R_2$ and $R_3$ such that $R^TLR=L$ where $R=\mathcal{P}(R_2,R_3)$. Set $Q=\mathcal{P}(I_p,I_q)$ and $P=R \underline{\otimes}Q$. Then $Q^THQ=H^{\#}$, and
	\begin{align*}
		P^T(L \underline{\otimes}H)P&=(R \underline{\otimes}Q)^T(L \underline{\otimes}H)(R \underline{\otimes}Q)\\
		&=(R^TLR) \underline{\otimes}(Q^THQ)\\
		&=L \underline{\otimes}H^{\#}.
	\end{align*}
	Thus the graphs $G_{L \underline{\otimes} H}$ and $G_{L \underline{\otimes} H^{\#}}$ are isomorphic, and the isomorphism induced by the permutation matrix $P$ respects the partite sets.
	
Proof idea of the other case is similar.
\end{proof}
Weichsel proved that if $G_L$ and $G_H$ are two connected bipartite graphs, then $G_L\times G_H$ has exactly two connected bipartite components \cite{weichsel}.
Godsil and McKay noted that the disjoint union of the bipartite graphs $G_{L\underline{\otimes} H}$ and $G_{L\underline{\otimes} H^{\#}}$ is the direct product $G_{L}\times G_{H}$ \cite{godsil-mckay-1976}. Hence, if $G_L$ and $G_H$ are connected, then the two connected components of $G_{L\otimes H}$ are $G_{L\underline{\otimes} H}$ and $G_{L\underline{\otimes} H^{\#}}$.
Using these observations,  we obtain Hammack's result as a corollary of Theorem \ref{inter-imp-iso}.

\begin{cor}\cite[Theorem 1]{hammack}
	Suppose $G_1$ and $G_2$ are connected bipartite graphs. The two components of $G_1 \times G_2$ are isomorphic if and only if at least one of $G_1$ or $G_2$ has the property $\pi.$
\end{cor}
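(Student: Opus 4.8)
The plan is to deduce this corollary directly from Theorem~\ref{inter-imp-iso} together with the two structural observations recalled just above it. First I would unwind the setup: given connected bipartite graphs $G_1$ and $G_2$, fix bipartitions and let $V$ and $B$ be their biadjacency matrices, so that $G_1 = G_L$ and $G_2 = G_H$ in the notation of the construction. By Weichsel's theorem $G_1 \times G_2$ has exactly two connected bipartite components, and by the Godsil--McKay observation the disjoint union $G_{L\underline{\otimes} H} \sqcup G_{L\underline{\otimes} H^{\#}}$ equals $G_L \times G_H$; hence these two graphs are precisely the two connected components of $G_1\times G_2$. So the statement ``the two components of $G_1\times G_2$ are isomorphic'' is exactly ``$G_{L\underline{\otimes} H}$ and $G_{L\underline{\otimes} H^{\#}}$ are isomorphic as graphs.''

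Next I would argue that for these two graphs, being isomorphic is equivalent to being isomorphic via a map respecting the canonical bipartitions. Since $G_1$ and $G_2$ are connected, each of $G_{L\underline{\otimes} H}$ and $G_{L\underline{\otimes} H^{\#}}$ is connected, and a connected bipartite graph has a unique bipartition; the excerpt already notes that any isomorphism between connected isomorphic bipartite graphs respects the partite sets. Therefore any isomorphism between the two components automatically respects the canonical bipartitions, so condition (a) of Theorem~\ref{inter-imp-iso} holds if and only if the two components are isomorphic in the ordinary sense. Applying Theorem~\ref{inter-imp-iso}, this is equivalent to: at least one of $G_L = G_1$ or $G_H = G_2$ has property $\pi$. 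Finally I would remark that for connected bipartite graphs, property $\pi$ with respect to the biadjacency matrix coincides with the original property $\pi$ from the definition (admitting an automorphism interchanging the partite sets), since the bipartition is unique; this reconciles the statement with Hammack's formulation.

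There is no real obstacle here — the work is entirely in correctly chaining the cited facts. The one point requiring a line of care is the identification of the two components: one must invoke Weichsel (exactly two components) and the Godsil--McKay identity (the disjoint union is the product) in tandem, since a priori $G_{L\underline{\otimes} H}$ and $G_{L\underline{\otimes} H^{\#}}$ need not each be connected, but connectedness of $G_1, G_2$ forces it. I would write the proof as follows.

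\begin{proof}
Let $V$ and $B$ be biadjacency matrices of $G_1$ and $G_2$, respectively, so in the notation of Section~\ref{construc-sec} we have $G_1 = G_L$ and $G_2 = G_H$. By a result of Godsil and McKay \cite{godsil-mckay-1976}, the disjoint union of $G_{L\underline{\otimes} H}$ and $G_{L\underline{\otimes} H^{\#}}$ equals $G_L \times G_H = G_1 \times G_2$. Since $G_1$ and $G_2$ are connected, Weichsel's theorem \cite{weichsel} implies that $G_1 \times G_2$ has exactly two connected bipartite components; consequently $G_{L\underline{\otimes} H}$ and $G_{L\underline{\otimes} H^{\#}}$ are connected, and they are precisely the two components of $G_1 \times G_2$.

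Thus the two components of $G_1 \times G_2$ are isomorphic if and only if $G_{L\underline{\otimes} H}$ and $G_{L\underline{\otimes} H^{\#}}$ are isomorphic. Each of these graphs is connected and bipartite, hence has a unique bipartition, and any isomorphism between two connected isomorphic bipartite graphs respects the partite sets. Therefore $G_{L\underline{\otimes} H}$ and $G_{L\underline{\otimes} H^{\#}}$ are isomorphic if and only if there is an isomorphism between them respecting the partite sets of the canonical bipartitions. By Theorem~\ref{inter-imp-iso}, this holds if and only if at least one of $G_L = G_1$ or $G_H = G_2$ has property $\pi$. Finally, since $G_1$ and $G_2$ are connected, property $\pi$ with respect to the biadjacency matrix coincides with the existence of an automorphism interchanging the partite sets, which is the property $\pi$ in the statement.
\end{proof}
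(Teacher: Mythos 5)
Your proposal is correct and follows essentially the same route as the paper: it identifies the two components of $G_1\times G_2$ with $G_{L\underline{\otimes} H}$ and $G_{L\underline{\otimes} H^{\#}}$ via Weichsel's theorem and the Godsil--McKay observation, uses connectedness to see that any isomorphism respects the (unique, hence canonical) bipartitions, and then applies Theorem \ref{inter-imp-iso}. This matches the paper's argument, which states the same chain of observations just before the corollary (with the respects-partite-sets step appearing as Theorem \ref{connec-eta-cond}).
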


The bipartite graphs  constructed by Ji, Gong and Wang have the
property that any isomorphism between them respects the partite sets \cite[Lemma 3.2]{ji-gong-wang}.
From Theorem \ref{inter-imp-iso}, we observe that respecting partite sets  is  the key property for characterizing isomorphism in terms of PET matrices (property $\pi$). To this end, we define  property $\eta$, which relaxes the connectedness assumption in the Hammack's result and includes a broader class of bipartite graphs.

\begin{defn}\label{prop-eta-def}
Two bipartite graphs $G_{L}$ and $G_{H}$ are said to have \textit{property $\eta$}, if whenever the bipartite graphs $G_{L \underline{\otimes} H}$ and $G_{L \underline{\otimes} H^{\#}}$ are isomorphic, there exists an isomorphism between $G_{L \underline{\otimes} H}$ and $G_{L \underline{\otimes} H^{\#}}$ that respects the partite sets of the canonical bipartitions.
\end{defn}

Next we show that any pair of connected bipartite graphs have  property $\eta$ and thus property $\eta$ is relaxation of connectedness requirement from Hammack's result \cite[Theorem 1]{hammack}.

\begin{thm}\label{connec-eta-cond}
	Let $G_L$ and $G_H$ be connected bipartite graphs. Then, they have property $\eta$.
\end{thm}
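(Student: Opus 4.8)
The plan is to show that when $G_L$ and $G_H$ are connected, every isomorphism between $G_{L \underline{\otimes} H}$ and $G_{L \underline{\otimes} H^{\#}}$ automatically respects the canonical partite sets, which immediately gives property $\eta$. The starting observation (due to Godsil--McKay, recalled just before the statement) is that $G_{L \underline{\otimes} H} \sqcup G_{L \underline{\otimes} H^{\#}} = G_L \times G_H$, and by Weichsel's theorem the direct product of two connected bipartite graphs has exactly two connected bipartite components; hence $G_{L \underline{\otimes} H}$ and $G_{L \underline{\otimes} H^{\#}}$ are themselves connected bipartite graphs. Therefore the real content is the elementary fact already quoted in Section~\ref{prelims}: \emph{any isomorphism between two connected isomorphic bipartite graphs respects the partite sets}. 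Since a connected bipartite graph has a unique bipartition (up to swapping the two sides), and the canonical bipartition of $G_{L \underline{\otimes} H}$ is by definition the one induced by the biadjacency matrix $V \otimes B$, any isomorphism $f$ from $G_{L \underline{\otimes} H}$ to $G_{L \underline{\otimes} H^{\#}}$ must send the unique bipartition of the source to the unique bipartition of the target, i.e. $f$ respects the canonical partite sets.

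Concretely, the steps I would carry out are: (i) assume $G_{L \underline{\otimes} H}$ and $G_{L \underline{\otimes} H^{\#}}$ are isomorphic (otherwise there is nothing to prove, as the defining implication in Definition~\ref{prop-eta-def} is vacuous); (ii) invoke $G_{L \underline{\otimes} H} \sqcup G_{L \underline{\otimes} H^{\#}} = G_L \times G_H$ together with Weichsel's theorem to conclude each of the two graphs is connected (and bipartite); (iii) apply the general principle that an isomorphism between connected bipartite graphs respects the partite sets; (iv) note that for a connected bipartite graph the bipartition is unique up to interchange, so the partite sets respected by $f$ are precisely the canonical ones. This chain establishes that $G_L$ and $G_H$ have property $\eta$.

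I do not expect a substantial obstacle here; the argument is essentially a bookkeeping assembly of Weichsel's theorem, the Godsil--McKay decomposition, and uniqueness of the bipartition of a connected bipartite graph. The only point requiring a little care is making sure the "partite sets of the canonical bipartition" in Definition~\ref{prop-eta-def} genuinely coincide (up to the allowed swap) with the unique bipartition of the connected graph $G_{L \underline{\otimes} H}$ — this follows because $V \otimes B$ is exactly the biadjacency matrix that realizes $G_{L \underline{\otimes} H}$ as written, so the bipartition it induces \emph{is} a bipartition of the graph, and by connectedness there is no other. Once this identification is noted, the theorem follows, and combining it with Theorem~\ref{inter-imp-iso} recovers Hammack's result (the Corollary above) as the connected special case.
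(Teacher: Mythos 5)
Your proposal is correct and follows essentially the same route as the paper: both arguments use the Godsil--McKay observation that $G_{L \underline{\otimes} H} \sqcup G_{L \underline{\otimes} H^{\#}} = G_L \times G_H$ together with Weichsel's theorem to conclude that the two graphs are connected, and then invoke the fact from Section \ref{prelims} that any isomorphism between connected bipartite graphs respects the (unique) partite sets. Your extra remark identifying the canonical bipartition with the unique bipartition of the connected graph is a point the paper leaves implicit, but it is the same argument.
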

\begin{proof}
	Suppose $G_L$ and $G_H$ are connected, and $G_{L\underline{\otimes} H}$ and $G_{L\underline{\otimes} H^{\#}}$ are isomorphic. Since $G_{L\underline{\otimes} H}$ and $G_{L\underline{\otimes} H^{\#}}$ are connected bipartite graphs, thus any isomorphism between them respects the partite sets. Hence $G_{L}$ and $G_{H}$ have property $\eta$.
\end{proof}

A \textit{biregular bipartite graph} is a bipartite graph $G$ for which any two vertices in the same partite sets have the same degree as each other. If degree of the vertices in one of the partite sets is $k$ and  degree of the vertices in the other partite set is $l$, then the graph is said to be $(k,l)$-biregular. We say that a biregular bipartite graph has distinct degrees if $k \neq l$.  Next we prove that  property $\eta$ is satisfied if  one of the bipartite graphs is biregular with distinct degrees.

%
%\textbf{If a bipartite graph $ G$ is disconnected, then $G$ has more than one bipartition. The \textit{canonical bipartition} of the partitioned tensor product graph $G_{L \underline{\otimes} H}$ (resp., $G_{L \underline{\otimes} H^{\#}}$) is the bipartition induced by the biadjacency matrix $V \otimes B$ (resp., $V \otimes B^T$ ), where $V$ and $B$ are biadjacency matrices of $G_L$ and $G_H$, respectively. }

\begin{thm}\label{biregu-eta}
	Let $G_{L}$ and $G_{H}$ be bipartite graphs. If $G_{L}$ is a non-empty $(k,l)$-regular bipartite graph with  $k\neq l$, then $G_L$ and $G_H$ have property $\eta$.
\end{thm}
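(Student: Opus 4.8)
The plan is to show that when $G_L$ is $(k,l)$-biregular with $k \neq l$, any isomorphism between $G_{L \underline{\otimes} H}$ and $G_{L \underline{\otimes} H^{\#}}$ must respect the canonical partite sets, so that property $\eta$ holds. The canonical bipartitions of $G_{L \underline{\otimes} H}$ and $G_{L \underline{\otimes} H^{\#}}$ are those induced by the biadjacency matrices $V \otimes B$ and $V \otimes B^T$, respectively. Write $V$ as an $m \times n$ $(0,1)$-matrix; biregularity of $G_L$ means every row of $V$ has the same number of ones, say $k$, and every column has $l$ ones, with $k \neq l$. First I would compute the degree sequences on each side of the two canonical bipartitions. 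In $G_{L \underline{\otimes} H}$, the "left" part (indexed by rows of $V \otimes B$) has vertices of degree $k \cdot d$ where $d$ ranges over the row sums of $B$, and the "right" part has vertices of degree $l \cdot d'$ where $d'$ ranges over row sums of $B^T$ (i.e. column sums of $B$). The key structural fact is that all left-degrees are multiples of $k$ while all right-degrees are multiples of $l$ — but more usefully, I would track the ratio/normalization carefully.

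The heart of the argument is this: an arbitrary graph isomorphism $f$ between two connected components need not a priori respect a bipartition, but here I want to argue it must. The cleanest route is to reduce to the connected case componentwise. Decompose $G_L$ into connected components; since $G_L$ is $(k,l)$-biregular, so is each component. Similarly decompose $G_H$. Now $G_{L \underline{\otimes} H}$ and $G_{L \underline{\otimes} H^{\#}}$ are each disjoint unions of the corresponding partitioned tensor products of components (together with the components coming from tensoring a $G_L$-component with a $G_H$-component — recall that the disjoint union of $G_{L \underline{\otimes} H}$ and $G_{L \underline{\otimes} H^{\#}}$ is $G_L \times G_H$, and the components of $G_L \times G_H$ are understood via Weichsel's theorem). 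An isomorphism between the two big graphs restricts to isomorphisms between connected components, each of which is bipartite and connected, hence respects its own partite sets. The remaining task is to show these component-level partite sets assemble into the canonical global bipartition in a consistent way — i.e. that $f$ doesn't send the "left part" of one component to the "right part" in a way that's globally inconsistent.

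This is exactly where the hypothesis $k \neq l$ does its work, and I expect it to be the main obstacle. The point: within a single connected component $C$ of $G_{L \underline{\otimes} H}$, obtained from a component $C_L$ of $G_L$ and a component $C_H$ of $G_H$, the two sides of $C$'s bipartition carry degrees that are multiples of $k$ on one side and multiples of $l$ on the other (inherited from the biregularity of $C_L$). Since $k \neq l$ — and after verifying the degrees cannot coincide numerically by some coincidence of the $B$-side degrees, which needs a short gcd/divisibility argument or a normalization argument — the "$k$-side" of every component of $G_{L \underline{\otimes} H}$ is distinguishable from the "$l$-side", and this labelling is consistent with the canonical bipartition (the $k$-side sits inside the rows of $V \otimes B$, coming from rows of $V$). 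The same holds for $G_{L \underline{\otimes} H^{\#}}$, but now with the roles relative to $B^T$. Any isomorphism $f$ restricted to a component $C$ must send $C$'s $k$-side to the $k$-side of its image (a vertex of degree a multiple of $k$ but not of $l$ goes to such a vertex), hence $f$ respects the canonical partite sets globally. I would be careful about the edge case where a component of $G_L$ is a single edge or otherwise degenerate; biregular with $k \neq l$ already forbids the single-edge component (which would be $(1,1)$-biregular), so that is consistent. With the degree-parity/divisibility bookkeeping pinned down, the conclusion that $G_L$ and $G_H$ have property $\eta$ follows, and then combining with Theorem \ref{inter-imp-iso} gives the isomorphism characterization in terms of property $\pi$ for biregular bipartite graphs with distinct degrees.
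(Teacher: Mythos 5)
Your reduction to connected components is fine as far as it goes (an isomorphism permutes components, and each component of $G_{L\underline{\otimes} H}$, being connected and bipartite, has its partite sets respected by the restricted map), but the step you yourself flag as needing ``a short gcd/divisibility argument'' is where the proof breaks: the two sides of a component are \emph{not} distinguishable by degrees in general. Degrees on the canonical left side are $k\,b_i$ (row sums $b_i$ of $B$) and on the right side are $l\,b'_j$ (column sums $b'_j$ of $B$), and these can coincide numerically even though $k\neq l$: take $k=2$, $l=3$ and a matrix $B$ having a row of sum $3$ and a column of sum $2$, so both sides contain vertices of degree $6$. Since the $B$-side degrees are unconstrained, no divisibility or normalization argument can certify that ``a multiple of $k$ but not of $l$'' vertices exist at all, so your criterion for deciding which side a component's vertex class lands on is vacuous in exactly the cases that matter. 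Moreover, even granting the componentwise analysis, you never supply a mechanism for repairing an isomorphism that swaps sides on some components while preserving them on others; property $\eta$ only asks for the existence of a partite-set-respecting isomorphism, but you neither show every isomorphism respects the canonical bipartition nor show how to modify one that does not.

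The paper's proof avoids this trap with an extremal, iterative argument on the whole graphs rather than a pointwise degree dichotomy: assuming $k<l$, it looks at the vertices of \emph{maximum} degree in the canonical set $X_1$. If such a vertex $x_1$ were sent into $Y_2$, degree preservation would force $l\,b_i=k\,b_j$ with $b_i$ maximal, hence $b_i\geq b_j$ and so $k\geq l$ (after disposing of the degenerate case $b_j=0$, which forces $B=0$), a contradiction. So the maximum-degree vertices of $X_1$ go into $X_2$; one then deletes the corresponding rows and columns of $B$, passes to the induced subgraphs with biadjacency matrices $V\otimes B'$ and $V\otimes B'^{T}$, and repeats until $B$ is exhausted, concluding $f(X_1)=X_2$. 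It is the maximality of the degree, not a divisibility property of $k$ and $l$, that makes the comparison $l\,b_i=k\,b_j$ impossible; your argument needs to be reorganized along these lines (or some other global device) to close the gap.
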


\begin{proof}
Let the graphs $\Gamma_1=G_{L\underline{\otimes} H}$ and $\Gamma_2=G_{L\underline{\otimes} H^{\#}}$ be isomorphic. Then,
$\Gamma_1$ and $\Gamma_2$ are cospectral. By Theorem \ref{cospec}, the graph $G_H$ is balanced, since $G_L$ cannot be balanced as $k\neq l$.

Let $V(\Gamma_i) = X_i \cup Y_i$ be the canonical vertex partitions of the graphs $\Gamma_i$ for $i = {1,2}$. Without loss of generality, assume that $k <l$. Let $f$ be an isomorphism from $\Gamma_1$ to $\Gamma_2$. Let $b_i$ and $b_i'$ denote the $i^{th}$ row sum of the matrices $B$ and $B^T$, respectively. Let $x_1$ be the vertex of maximum degree in $X_1$.  Suppose that $f(x_1)\in Y_2$. Then $d_{\Gamma_1}(x_1)=lb_i$ for some $1\leq i\leq p$,  and $d_{\Gamma_2}(f(x_1))=kb_j$ for some $1\leq j\leq p$. Since the isomorphism preserves the degrees, we have $lb_i=kb_j$. Since $x_1$ has maximum degree in $X_1$, $b_i\geq b_j$ for any $1\leq j\leq p$, and hence $kb_j\geq lb_j$. If $b_j \neq 0$, then $k\geq l$,  a contradiction to the initial assumption that $k< l$. Hence, if $x_1\in X_1$, then $f(x_1)\in X_2$. If $b_j=0$ then $lb_i=kb_j$, $b_i=0$. But $x_1$ is a vertex of maximum degree $lb_i$ in the set $X_1$ and thus $B=0$. So we could choose $f(x_1)\in X_2$.
In any case, $f(x_1)\in X_2$.

Let $x_1,\ldots,x_{rm}$ be the vertices of $X_1$ with the same maximum degree such that $d_{\Gamma_1}(x_{1+(s-1)m})=\ldots=d_{\Gamma_1}(x_{m+(s-
	1)m})=lb_{i_s}$ for $s\in \{1,2,\ldots, r\}$ where $b_{i_1}=\ldots=b_{i_r}$ for $1\leq i_1, \ldots, i_r \leq p$.
Then, using the previous argument, $f(x_1),\ldots, f(x_{rm}) \in X_2$ such that $d_{\Gamma_2}(f(x_{1+(s-1)m}))=\ldots=d_{\Gamma_2}(f(x_{m+(s-1)m}))=lb'_{j_s}$ for $s\in \{1,2,\ldots, r\}$ where $b_{j_1}'=\ldots=b_{j_r}'$ for $1\leq j_1,\ldots, j_r\leq p$. Define $B{'}$ to be the matrix obtained by removing the $i_s^{th}$ row and $j_s^{th}$ column of $B$ for all $s\in \{1,2,\ldots, r\}$.
Define $\Gamma_1'$ and $\Gamma_2'$ to be the induced bipartite graphs corresponding to the biadjacency matrices $V\otimes B{'}$ and $V\otimes B{'}^T$,
respectively. Since $\Gamma_1'$ and $\Gamma_2'$ are isomorphic as well, apply the same argument for $\Gamma_1'$ and $\Gamma_2'$ until all the rows and columns of $B$ are exhausted. Thus $f(X_1)=X_2$ and hence $f(Y_1)=Y_2$.

Similarly, if  $k>l$, then consider the set of vertices of maximum degree in $Y_1$ and show that $f(Y_1)=Y_2$ and hence $f(X_1)=X_2$. Hence, $G_{L}$ and $G_{H}$ satisfy property $\eta$.
\end{proof}

Note that at each step, the vertices from both the partite sets of the induced bipartite graphs of $\Gamma_1$ and $\Gamma_2$ are being removed. This is justified since our motive is to first show just $f(X_1)=X_2$. Ji, Gong and Wang in Lemma 3.2. \cite{ji-gong-wang} remove vertices from only $X_1$ and $X_2$ at each step.

Since we are interested in the construction of cospectral nonisomorphic graphs, we use this result to construct  cospectral graphs that are not isomorphic.

\begin{thm}\label{wang-genera}
	Let $G_{L}$ and $G_{H}$ be bipartite graphs. Let $G_{L}$ be a biregular bipartite graph with distinct degrees and let $G_H$ be balanced. 
%	Suppose the bipartite graphs $G_{L\underline{\otimes} H}$ and $G_{L\underline{\otimes} H^{\#}}$ are cospectral for the adjacency and normalized Laplacian matrices. 
	Then the graphs $G_{L\underline{\otimes} H}$ and $G_{L\underline{\otimes} H^{\#}}$ are  nonisomorphic if and only if $G_{H}$ does not admit an automorphism that interchanges its partite sets.
\end{thm}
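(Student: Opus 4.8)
The plan is to reduce the statement to a combination of results already established: Theorem~\ref{biregu-eta} (property $\eta$ when $G_L$ is biregular with distinct degrees), Theorem~\ref{inter-imp-iso} (existence of an isomorphism respecting the canonical bipartitions $\iff$ at least one of $G_L$, $G_H$ has property $\pi$), and Proposition~\ref{bip-aut} (property $\pi$ for $G_H$ $\iff$ $G_H$ admits an automorphism interchanging its partite sets). Since $G_L$ is $(k,l)$-biregular with $k\neq l$, it can never have property $\pi$, so $G_H$ is the only possible ``source'' of an isomorphism, and the desired biconditional falls out by negation.

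First I would record that $G_L$ does not have property $\pi$. Having distinct degrees, $G_L$ is non-empty, and counting edges across its bipartition $X\cup Y$ gives $k|X|=l|Y|>0$, whence $|X|\neq|Y|$; thus $G_L$ is not balanced, its biadjacency matrix $V$ is not even square, so $V$ is non-PET and $G_L$ lacks property $\pi$. (In the same vein, Theorem~\ref{cospec} together with the hypothesis that $G_H$ is balanced guarantees that $G_{L\underline{\otimes} H}$ and $G_{L\underline{\otimes} H^{\#}}$ are at least cospectral, so the pair under consideration is genuinely a cospectral pair.)

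Next I would assemble the equivalences. By Theorem~\ref{biregu-eta}, $G_L$ and $G_H$ have property $\eta$, so $G_{L\underline{\otimes} H}$ and $G_{L\underline{\otimes} H^{\#}}$ are isomorphic if and only if there is an isomorphism between them respecting the partite sets of the canonical bipartitions. By Theorem~\ref{inter-imp-iso}, the latter holds if and only if at least one of $G_L$ or $G_H$ has property $\pi$, which by the previous step is equivalent to $G_H$ having property $\pi$, and by Proposition~\ref{bip-aut} this is in turn equivalent to $G_H$ admitting an automorphism that interchanges its partite sets. Negating both sides yields exactly the claim: $G_{L\underline{\otimes} H}$ and $G_{L\underline{\otimes} H^{\#}}$ are nonisomorphic if and only if $G_H$ admits no automorphism interchanging its partite sets.

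As for the main obstacle: essentially all of the real work has already been done in Theorem~\ref{biregu-eta}, whose proof is the technical heart of this section; once property $\eta$ is in hand, what remains is a short bookkeeping argument. The only points needing a moment of care are (i) the observation that ``distinct degrees'' forces $G_L$ to be unbalanced, hence $V$ non-PET, which is what rules out the $G_L$-branch of Theorem~\ref{inter-imp-iso}, and (ii) threading the chain of ``if and only if''s correctly, so that negation produces the stated equivalence and not its converse. If one wished to drop the ``$G_H$ balanced'' hypothesis, an easy separate vertex-count argument would handle the unbalanced case, but with that hypothesis in place no case distinction is needed.
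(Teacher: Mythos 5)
Your proposal is correct and follows essentially the same route as the paper: show $G_L$ is unbalanced (hence cannot have property $\pi$) via the degree count $k|X|=l|Y|$ with $k\neq l$, then combine Theorem~\ref{biregu-eta} with Theorem~\ref{inter-imp-iso} and negate. The only cosmetic difference is that you invoke Proposition~\ref{bip-aut} explicitly to translate property $\pi$ into the automorphism statement, which the paper leaves implicit.
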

\begin{proof}
	Since $G_L$ is a biregular bipartite graph with distinct degrees, the corresponding $m\times n$ biadjacency matrix $V$ has constant row sum $k$ and constant column sum $l$. Since the sum of row sums must be the same as the sum of column sums, we have $km=ln$. But $k\neq l$, hence $m\neq n$. Hence, $G_L$ has unequal partition sizes. Since $G_H$ has equal partition sizes,  by Theorem \ref{cospec} and Theorem \ref{construction},  the graphs $G_{L\underline{\otimes} H}$ and $G_{L\underline{\otimes} H^{\#}}$ are cospectral. Now, as $G_L$ has unequal partitions sizes, it doesn't admit an automorphism that interchanges its partite sets. Hence, the condition for non-isomorphism follows from Theorems \ref{inter-imp-iso} and \ref{biregu-eta}.
\end{proof}

	Now as a corollary, we obtain the result of Ji, Gong and Wang.
	\begin{cor}\label{wang-res}\cite[Theorem 3.1]{ji-gong-wang}
		Let $V=J_{m,n}$ such that $m\neq n$ and let $B$ is a square matrix. Then, the bipartite graphs $G_{L\underline{\otimes} H}$ and $G_{L\underline{\otimes} H^{\#}}$ are cospectral for the adjacency as well as the normalized Laplacian matrices, and they are isomorphic if and only if $B$ is PET.
	\end{cor}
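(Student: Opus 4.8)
The plan is to derive this corollary directly from Theorem \ref{wang-genera}, supplemented by Proposition \ref{bip-aut} to re-express the isomorphism condition in terms of the matrix $B$. First I would identify the graph $G_L$: since $V = J_{m,n}$ is the all-ones matrix, it is precisely the biadjacency matrix of the complete bipartite graph $K_{m,n}$, in which every vertex of the first partite set has degree $n$ and every vertex of the second partite set has degree $m$. Hence $G_L = K_{m,n}$ is an $(n,m)$-biregular bipartite graph, and because $m \neq n$ (and, under the standing assumption of Section \ref{par-charac-eta}, $G_L$ has no isolated vertices, so $m,n \geq 1$), it is a non-empty biregular bipartite graph with distinct degrees. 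Next, since $B$ is a square matrix we have $p = q$, so $G_H$ is balanced.

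With these observations, Theorem \ref{wang-genera} applies with no further work: it gives that $G_{L\underline{\otimes} H}$ and $G_{L\underline{\otimes} H^{\#}}$ are cospectral for both the adjacency and the normalized Laplacian matrices (this half of Theorem \ref{wang-genera} being itself an instance of Theorems \ref{cospec} and \ref{construction}, using that $G_H$ is balanced), and that these two graphs are nonisomorphic if and only if $G_H$ does not admit an automorphism that interchanges its partite sets. Taking the contrapositive of the second assertion, $G_{L\underline{\otimes} H}$ and $G_{L\underline{\otimes} H^{\#}}$ are isomorphic if and only if $G_H$ admits an automorphism that interchanges its partite sets.

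To finish, I would translate this last condition into a statement about $B$ via Proposition \ref{bip-aut}: since $B$ is the biadjacency matrix of $G_H$, the graph $G_H$ admits an automorphism interchanging the partite sets induced by $B$ if and only if $B$ is a PET matrix. Chaining the two equivalences gives that $G_{L\underline{\otimes} H}$ and $G_{L\underline{\otimes} H^{\#}}$ are isomorphic if and only if $B$ is PET, which is exactly the assertion of the corollary.

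Since every ingredient is already in place, I do not expect a genuine obstacle here; the only point that needs a moment of care is verifying that $K_{m,n}$ really does satisfy the hypotheses of Theorem \ref{wang-genera} — that it is non-empty and biregular with two distinct degrees — and noting that the squareness of $B$ is precisely what forces $G_H$ to be balanced, which is what the cospectrality half of the statement relies on.
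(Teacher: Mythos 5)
Your proposal is correct and follows essentially the same route as the paper: identify $G_L$ as a biregular bipartite graph with distinct degrees (since $V=J_{m,n}$, $m\neq n$), note $G_H$ is balanced because $B$ is square, and invoke Theorem \ref{wang-genera}. The paper leaves the final translation from ``$G_H$ admits a partite-set-interchanging automorphism'' to ``$B$ is PET'' implicit, whereas you spell it out via Proposition \ref{bip-aut}; this is a minor elaboration, not a different argument.
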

	\begin{proof}
		Since $V=J_{m,n}$ and $m\neq n$, the corresponding bipartite graph $G_L$ is a biregular bipartite graph with distinct degrees. Hence, the result follows from Theorem \ref{wang-genera}.
	\end{proof}
%
%The cancellation idea of Ji, Gong and Wang is based on Hall's Marriage theorem. Our proof makes no appeal to Hall's Marriage theorem.
% They show that when $V=J_{m,n}$ and $B$ has no zero rows or zero columns, $V\otimes B$ and $V\otimes B^T$ are permutationally equivalent if and only if $B$ is PET using another approach based on Hall's Theorem.
	
Now, let us illustrate the construction given in Theorem \ref{wang-genera} with an example.
\begin{ex}
	The following pair of  matrices $V$ and $B$ satisfy all the conditions stated in  Theorem \ref{biregu-eta} and corresponding graphs are illustrated below.
	\begin{align*}V =\begin{bmatrix}1&0\\1&0\\0&1\\0&1\end{bmatrix} \text{and } \; B =\begin{bmatrix}1&1&0\\1&0&1\\1&0&0\end{bmatrix}.
	\end{align*}
	
	In the following figure, $G_{L}$ and $G_{H}$ denote the bipartite graphs with biadjacency matrices $V$ and $B$, respectively.  The graphs $G_{L\underline{\otimes} H}$ and $G_{L \underline{\otimes} H^{\#}}$ are the cospectral nonisomorphic pairs. Note that this example of cospectral bipartite graphs is not obtainable
	from the results of Ji et al. and Hammack.
%\begin{figure}
%	\centering
%	\begin{subfigure}{.5\textwidth}
%		\centering
%		\includegraphics[width=.6\linewidth]{gen_jgw_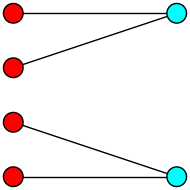}
%		\caption{$G_{L}$}
%		\label{fig:sub1}
%	\end{subfigure}%
%	\begin{subfigure}{.5\textwidth}
%		\centering
%		\includegraphics[width=.6\linewidth]{gen_jgw_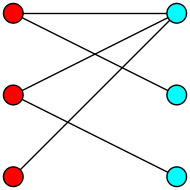}
%		\caption{ $G_{H}$}
%		\label{fig:sub2}
%	\end{subfigure}
%
%\end{figure}
%
%\begin{figure}
%	\centering
%	\begin{minipage}{.5\textwidth}
%		\centering
%		\includegraphics[width=1\linewidth]{gen_jgw_lh-1.png}
%		\captionof{figure}{$G_{L\underline{\otimes} H}$}
%		\label{fig:test1}
%	\end{minipage}%
%	\begin{minipage}{.5\textwidth}
%		\centering
%		\includegraphics[width=1\linewidth]{gen_jgw_lh-2.png}
%		\captionof{figure}{$G_{L\underline{\otimes} H^{\#}}$}
%		\label{fig:test2}
%	\end{minipage}
%	\caption{Graphs of the  example above}
%\label{fig:test}
%\end{figure}

	\begin{figure}[H]
	\centering
	\includegraphics[scale=0.4]{l.png}
	\includegraphics[scale=0.4]{h.png}
	\includegraphics[scale=0.5]{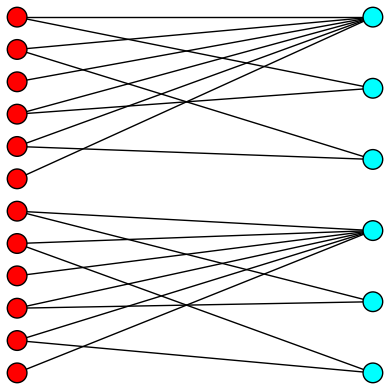}
	\includegraphics[scale=0.5]{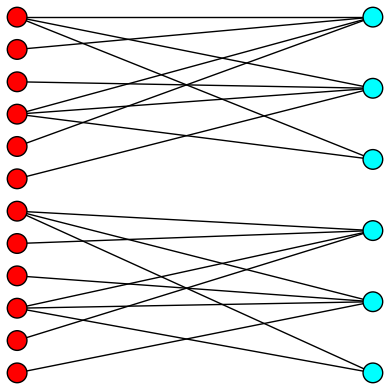} \caption{$G_{L}$, $G_{H}$, $G_{L\underline{\otimes} H}$ and $G_{L\underline{\otimes} H^{\#}}$}\label{fig2}
\end{figure}
\end{ex}

\subsubsection*{Acknowledgment:}
We are indebted to the referees for the comments and suggestions. M. Rajesh Kannan would like to thank the Department of Science and Technology, India, for financial support through the projects MATRICS (MTR/2018/000986).

\bibliographystyle{plain}
\nocite{hitesh}
\bibliography{references}
\end{document}